\title{Model theory of derivations of the Frobenius map revisited}
\author[J. GOGOLOK]{Jakub Gogolok}
\thanks{Supported by the Narodowe Centrum Nauki grant no. 2018/31/B/ST1/00357.}
\address{Instytut Matematyczny\\
Uniwersytet Wroc{\l}awski\\
Wroc{\l}aw\\
Poland}
\email{jakub.gogolok@math.uni.wroc.pl} \urladdr{http://www.math.uni.wroc.pl/\textasciitilde gogolok/}
\thanks{2020 \textit{Mathematics Subject Classification} Primary 03C10, 03C60; Secondary 12H05}
\thanks{\textit{Key words and phrases}. derivations of the Frobenius map, quantifier elimination}
 \DeclareMathOperator{\fr}{Fr}
 \DeclareMathOperator{\alg}{alg}
\DeclareMathOperator{\rat}{rat}
\DeclareMathOperator{\dcf}{DCF}
\newtheorem{theorem}{Theorem}[section]
\newtheorem{prop}[theorem]{Proposition}
\newtheorem{lemma}[theorem]{Lemma}
\newtheorem{cor}[theorem]{Corollary}
\theoremstyle{definition}
\newtheorem{example}[theorem]{Example}
\newtheorem{remark}[theorem]{Remark}
\newtheorem{question}[theorem]{Question}
\begin{document}
\newcommand{\lili}{\underleftarrow{\lim }}
\newcommand{\coco}{\underrightarrow{\lim }}
\newcommand{\twoc}[3]{ {#1} \choose {{#2}|{#3}}}
\newcommand{\thrc}[4]{ {#1} \choose {{#2}|{#3}|{#4}}}
\newcommand{\Zz}{{\mathds{Z}}}
\newcommand{\Ff}{{\mathds{F}}}
\newcommand{\Cc}{{\mathds{C}}}
\newcommand{\Rr}{{\mathds{R}}}
\newcommand{\Nn}{{\mathds{N}}}
\newcommand{\Qq}{{\mathds{Q}}}
\newcommand{\Kk}{{\mathds{K}}}
\newcommand{\Pp}{{\mathds{P}}}
\newcommand{\ddd}{\mathrm{d}}
\newcommand{\Aa}{\mathds{A}}
\newcommand{\dlog}{\mathrm{ld}}
\newcommand{\ga}{\mathbb{G}_{\rm{a}}}
\newcommand{\gm}{\mathbb{G}_{\rm{m}}}
\newcommand{\gaf}{\widehat{\mathbb{G}}_{\rm{a}}}
\newcommand{\gmf}{\widehat{\mathbb{G}}_{\rm{m}}}
\newcommand{\ka}{{\bf k}}
\newcommand{\ot}{\otimes}
\newcommand{\si}{\mbox{$\sigma$}}
\newcommand{\ks}{\mbox{$({\bf k},\sigma)$}}
\newcommand{\kg}{\mbox{${\bf k}[G]$}}
\newcommand{\ksg}{\mbox{$({\bf k}[G],\sigma)$}}
\newcommand{\ksgs}{\mbox{${\bf k}[G,\sigma_G]$}}
\newcommand{\cks}{\mbox{$\mathrm{Mod}_{({A},\sigma_A)}$}}
\newcommand{\ckg}{\mbox{$\mathrm{Mod}_{{\bf k}[G]}$}}
\newcommand{\cksg}{\mbox{$\mathrm{Mod}_{({A}[G],\sigma_A)}$}}
\newcommand{\cksgs}{\mbox{$\mathrm{Mod}_{({A}[G],\sigma_G)}$}}
\newcommand{\crats}{\mbox{$\mathrm{Mod}^{\rat}_{(\mathbf{G},\sigma_{\mathbf{G}})}$}}
\newcommand{\crat}{\mbox{$\mathrm{Mod}^{\rat}_{\mathbf{G}}$}}
\newcommand{\cratinv}{\mbox{$\mathrm{Mod}^{\rat}_{\mathbb{G}}$}}
\newcommand{\ra}{\longrightarrow}
\newcommand{\bdcf}{\mathcal{B}-\dcf}

\def\Ind#1#2{#1\setbox0=\hbox{$#1x$}\kern\wd0\hbox to 0pt{\hss$#1\mid$\hss}
\lower.9\ht0\hbox to 0pt{\hss$#1\smile$\hss}\kern\wd0}

\def\ind{\mathop{\mathpalette\Ind{}}}

\begin{abstract}
We prove some results about the model theory of fields with a derivation of the Frobenius map, especially that the model companion of this theory is axiomatizable by axioms used by Wood in the case of the theory $\dcf_p$ and that it eliminates quantifiers after adding the inverse of the Frobenius map to the language. This strengthens the results from \cite{DerFro}. As a by-product, we get a new geometric axiomatization of this model companion. Along the way we also prove a quantifier elimination result, which holds in a much more general context and we suggest a way of giving ``one-dimensional'' axiomatizations for model companions of some theories of fields with operators.
\end{abstract}

\maketitle

\section{Introduction}

In this paper we investigate model-theoretically the so-called \textit{derivations of the Frobenius map} building on results by Kowalski in \cite{DerFro}. Let $K$ be a field of characteristic $p>0$ and let $n$ be a natural number. A \textit{derivation of the $n$-th power of Frobenius map} on $K$ is an additive map $\partial\colon K\to K$ satisfying the following twisted Leibniz rule
$$\partial\left( xy \right) = x^{p^n}\partial\left( y \right)+y^{p^n}\partial\left( x\right)$$
for $x,y\in K$. In \cite{DerFro} they are also called $n$-derivations for short, but we choose a more expressive name \textit{$\fr^n$-derivation}. If $D\colon K\to K$ is a derivation in the usual sense, then $\fr^n\circ D$ is a $\fr^n$-derivation, but most $\fr^n$-derivations do not come from usual derivations.

Derivations of the Frobenius map have some reasonable model theory, which was explored in \cite{DerFro}. In particular, the theory of fields with a $\fr^n$-derivation (or \textit{$\fr^n$-differential fields} for short) has a model companion, this model companion is strictly stable and eliminates quantifiers in a natural language (details below). In this paper we establish some further algebraic and model-theoretic properties of $\fr^n$-derivations and answer the six questions stated in \cite{DerFro}.

We now recall some definitions and facts from \cite{DerFro}. Fix a $\fr^n$-differential field $\left( K, \partial \right)$ of characteristic $p>0$. The kernel of $\partial$, called the \textit{constants} of $\left( K, \partial \right)$, is denoted by $K^\partial$ and it is a field extension of $K^p$. We call $\left( K, \partial \right)$ \textit{strict} if $K^\partial = K^p$. There is an obvious notion of $\fr^n$-differential extensions and any $\fr^n$-differential field has a strict $\fr^n$-differential extension (Lemma 1.9 in \cite{DerFro}). Moreover, derivations of the Frobenius map extend to separable field extensions and for algebraic extensions this extension is unique. We call a $\fr^n$-differential field \textit{differentially perfect}, if any of its $\fr^n$-differential extension is separable. Since there is always a strict extension, we have that any differentially perfect field is strict. Lemma 2.1 says that the converse also holds.

An important property of derivations of the Frobenius, which we will use a few times, is the following: for $m, n>0$, the composition of an $\fr^m$-derivation and an $\fr^n$-derivation is an $\fr^{m+n}$-derivation. This does not hold for usual derivations, as compositions of derivations produces ``operators of higher order'' (a Hasse-Schmidt derivation).

Let $L^\partial$ be the language of fields together with one unary function symbol $\partial$. Clearly, there is an $L^\partial$-theory $\fr^n-\operatorname{DF}_p$ (called $\operatorname{DF}_{p,n}$ in \cite{DerFro}), whose models are precisely $\fr^n$-differential field of characteristic $p$. We can and will also consider this theory in the languages $L^\partial_\lambda$ and $L^\partial_{\lambda_0}$, where we add the function symbols for the $\lambda$-functions and only $\lambda_0$ respectively (see \cite[Section 1.8]{ChGeneric} for the definition of $\lambda$-functions). Recall that $\lambda_0$ is interpreted in a field $K$ as a function, which is the inverse of the Frobenius on $K^p$ and zero everywhere else. The theory $\fr^n-\operatorname{DF}_p$ has a model companion in the language $L^\partial$, which we call $\fr^n-\dcf_p$. The axioms of this theory are geometric (see the beginning of Section 2 in \cite{DerFro}), of a similar form as the geometric axioms for $\dcf_0$ (see \cite{PiercePillay}). Unfortunately, there is an unnaturally looking assumption in the geometric axioms of $\fr^n-\dcf_p$ about the density of a certain equalizer. The results of this paper provide a more elegant geometric axiomatization of $\fr^n-\dcf_p$. A possibility for such an axiomatization was also the content of Question 4 in \cite{DerFro}. Since we will only use the fact that $\fr^n-\operatorname{DF}_p$ has a model companion and not the specific axiomatization of it, we will not recall the original geometric axioms and instead just present the nicer version following from our work (see Remark \ref{geo}).

The theory $\fr^n-\dcf_p$ eliminates quantifiers in the language $L^\partial_\lambda$ (see Theorem 2.2(v) in \cite{DerFro}). In \cite{DerFro} it is wrongly claimed that $\fr^n-\dcf_p$ is also a model companion of $\fr^n-\operatorname{DF}_p$ in the language $L^\partial_\lambda$, but this can be easily fixed (see the discussion above Remark \ref{redu}). In Section 2. we prove that $\fr^n-\dcf_p$ eliminates quantifiers already in the language $L^\partial_{\lambda_0}$, answering Question 3 from \cite{DerFro}. Actually, in Lemma \ref{lindisj} and Lemma \ref{lindisj2} we provide an easy proof of a surprisingly general fact, which implies quantifier elimination results for a very general class of theories of fields with operators. More precisely (see Subsection \ref{general}), we give a new and shorter proof of quantifier elimination in the framework of $B$-operators (introduced in \cite{BHKK}) and a stronger quantifier elimination result in the setting of $\mathcal{B}$-operators (introduced in \cite{beta-op}).

In Section 3. we prove that $\fr^n-\dcf_p$ can be axiomatized by the same axioms (Theorem \ref{main}), which were used by Wood to axiomatize $\dcf_p$ (see \cite{Wo1}). This result answers Question 4 and Question 5 from \cite{DerFro}. To formulate Wood's axioms we need the concept of differential polynomials, but this works just as in the ($\fr^0$-)differential case. Namely, for a $\fr^n$-differential field $\left( K, \partial \right)$ we consider the polynomial ring $K\left\{ X \right\} = K \left[ X, X',\dotsc , X^{\left( i \right)}, \dotsc \right]$ in countably many variables, and consider it with the unique $\fr^n$-derivation $\partial'$ on $K\left\{ X \right\}$, which extends $\partial\colon K \to K$ and has the property that $\partial\left( X^{\left( i \right)}\right) = X^{\left( i+1 \right)}$. By the usual abuse of notation, we write $\partial$ instead of $\partial'$. We call elements of $K\left\{ X \right\}$ differential polynomials with coefficients in $K$. For $f\in K\left\{ X \right\}$ and $a\in K$, we may evaluate $f$ at $a$ in an obvious manner, an we write $f\left( a \right)$ for this evaluation.. The order of $f\in K\left\{ X \right\}\setminus \left\{ 0 \right\}$ is the biggest $i$ such that $X^{\left( i \right)}$ appears in $f$, if $f$ is not constant, and $-1$ if $f$ is a non-zero constant. Let us denote by $T_{\operatorname{Wood}}$ the following scheme of axioms: a $\fr^n$-differential field $\left( K,\partial \right)$ satisfies $T$ if and only if
\begin{enumerate}\item $K$ is strict
    \item for any non-zero differential polynomials $f, g\in K\left\{ X \right\}$, where the order of $f$ is equal $m$ and the order of $g$ is smaller than $m$, if $\frac{\partial f}{\partial X^{\left( m \right)}}\neq 0$, then there is some $a\in K$ such that $f\left( a \right) = 0$ and $g\left( a \right)\neq 0$.
\end{enumerate}
It is clear that the axioms above are expressible in $L^\partial$. Moreover, the sentences expressing these axioms are verbatim the same as the Wood axioms for $\dcf_p$ in \cite{Wo1} (i. e. they do not depend on $n$).

The proofs in Section 3. are in the spirit of the proof from \cite{Wo1} that the Wood axioms work for $\dcf_p$, although we had to overcome some obstacles. For example, for usual derivations, the ring of differential polynomial has nice division properties, which is not the case for $\fr^n$-derivations for $n>0$, since e. g. for $a\in K$ we have 
$$\partial\left( aX \right) = a^{p^n}X' +\partial \left( a \right) X^{p^n},$$
so applying $\partial$ increases the degree in lower-order variables, which complicates possible division algorithms. Because of this lack of division, we can not use ``constrained ideals'' as in \cite{Wo1}, but we still can reason using some ``choosing polynomials of minimal order/degree'' type of reasoning, which is somewhat reminiscent of a division algorithm.

\vspace{0.5cm}

In what follows $n$ is always a positive natural number (i. e. we do not speak about usual derivations, when considering $\fr^n$-derivations), all fields are of characteristic $p>0$ and we set $q=p^n$, so that an $\fr^n$-differential field $\left( K, \partial \right)$ obeys
$$\partial\left( xy \right) = x^{q}\partial\left( y \right)+y^{q}\partial\left( x\right)$$
for all $x,y\in K$.

\section{Quantifier elimination}
Let $\left( K, \partial \right)$ be a field of characteristic $p$ with some \textit{operators}, i. e. a tuple (possibly infinite) of unary functions $\partial = \left( \partial_i\colon K\to K \right)_{i \in I}$. We define the \textit{constants} of $\left( K, \partial\right)$ as the set of common zeroes of all $\partial_i$ and denote it by $K^\partial$. We assume that $K^\partial$ is a field. We also assume that every $\partial_i$ is additive and ``$\fr^n$-linear over the constants'', i. e. there is some natural number $m_i$ such that for any $a\in K^\partial,\ x \in K$ we have $\partial\left( ax \right) = a^{p^{m_i}} \partial\left( x \right)$. Examples of such operators include derivations, difference operators, Hasse-Schmidt derivations and derivations of the Frobenius map (see also Subsection \ref{general}).
 
Let $\left( K, \partial \right) \subseteq \left( L, \partial' \right)$ be an extension of fields with operators in the above sense, that is, $\partial'|_K = \partial$ and the numbers $m_i$ mentioned above are the same for $K$ and $L$. By abuse of notation, we will use the same symbol $\partial$ for the operators on $K$ and on $L$.

\begin{lemma}
\label{lindisj}
Let $\left( K, \partial \right) \subseteq \left( L, \partial \right)$ be as above. Assume that $K$ is strict, i. e. $K^\partial = K^p$. Then $L^\partial$ and $K$ are linearly disjoint over $K^\partial$.
\end{lemma}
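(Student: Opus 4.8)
The plan is to prove the equivalent assertion that every finite tuple $a_1,\dots,a_r\in L^\partial$ which is linearly independent over $K^\partial=K^p$ stays linearly independent over $K$, by induction on $r$; this reformulation is legitimate since linear disjointness of $K$ and $L^\partial$ over $K^p$ amounts exactly to this, tested on finite subsets. The case $r\le 1$ is trivial. For the inductive step I would start from a would-be relation $\sum_{j=1}^r c_ja_j=0$ with $c_j\in K$ not all zero: dropping any $a_j$ with $c_j=0$ yields a shorter tuple, still $K^p$-linearly independent but $K$-linearly dependent, contradicting the inductive hypothesis, so all $c_j\in K^\times$, and after rescaling I may assume $c_1=1$. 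Applying each operator $\partial_i$, using that the $a_j$ lie in $L^\partial$ and that $\partial_i$ is $\fr$-linear over the constants, we get $\partial_i(c_ja_j)=a_j^{q_i}\partial_i(c_j)$ with $q_i=p^{m_i}$, and since $\partial_i(c_1)=0$ the relation collapses to $\sum_{j=2}^r a_j^{q_i}\,\partial_i(c_j)=0$, a $K$-linear relation among the twisted powers $a_2^{q_i},\dots,a_r^{q_i}$.

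The heart of the matter is to know that $a_2^{q_i},\dots,a_r^{q_i}$ are \emph{still} $K$-linearly independent: this forces every $\partial_i(c_j)$ to vanish, hence (letting $i$ vary) $c_j\in K^\partial=K^p$ for all $j$, so $\sum_j c_ja_j=0$ becomes a nontrivial $K^p$-relation --- a contradiction, and it is precisely here that strictness is used. To get this independence I would first establish, from the inductive hypothesis alone, the auxiliary fact that if $b_1,\dots,b_m\in L^\partial$ with $m<r$ are $K^p$-linearly independent, then so are $b_1^{p^s},\dots,b_m^{p^s}$ for every $s\ge 0$. This I would prove by an inner induction on $s$: given the claim for $s$, the inductive hypothesis in length $m<r$ upgrades it to $K$-linear independence of the $b_j^{p^s}$, and then a relation $\sum_j\gamma_j b_j^{p^{s+1}}=0$ with $\gamma_j=\eta_j^p\in K^p$ rewrites, by additivity of the Frobenius, as $\bigl(\sum_j\eta_j b_j^{p^s}\bigr)^p=0$, whence $\sum_j\eta_j b_j^{p^s}=0$ with $\eta_j\in K$, so all $\eta_j=0$. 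Feeding this with $m=r-1$, $s=m_i$ into the tuple $a_2,\dots,a_r$, and then applying the inductive hypothesis once more, yields exactly the $K$-linear independence of $a_2^{q_i},\dots,a_r^{q_i}$ that is needed, completing the induction.

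The main obstacle --- and the reason this is not simply the classical observation that the constants of a differential field extension are linearly disjoint from the base over the base constants --- is the Frobenius twist: applying $\partial_i$ to a relation among constants lands us on a relation among the $q_i$-th powers $a_j^{q_i}$ rather than among the $a_j$, and raising to $p$-th powers does not preserve linear independence over the (imperfect) field $K$ in general. The two devices that overcome this are strictness, which pins the annihilated coefficients down to $K^p$ rather than to some larger constant field, and the bootstrap above, which exploits $a_j\in L^\partial$ together with the inductive hypothesis to recover independence of the twisted powers. Everything else --- the reduction to nonzero coefficients, the normalization $c_1=1$, and the identity $\partial_i(c_ja_j)=a_j^{q_i}\partial_i(c_j)$ --- is routine.
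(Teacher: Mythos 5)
Your proposal is correct and takes essentially the same route as the paper: induction on the tuple length (the paper phrases it as a minimal counterexample), normalizing one coefficient to $1$, applying each $\partial_i$ to land on a relation among the twisted powers, and using strictness in exactly the same place to pin the annihilated coefficients into $K^p$ and contradict $K^p$-independence. Your ascending bootstrap showing $a_2^{p^s},\dots,a_r^{p^s}$ stay $K^p$- (hence $K$-) independent is just the contrapositive of the paper's descent from twist $p^{m_i}$ down to twist $1$, so the two arguments coincide in substance.
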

\begin{proof}
Assume the conclusion is not true and take the minimal $n>1$ such that there are some $x_1,\dotsc, x_n\in L^\partial$ are linearly dependent over $K$, but linearly independent over $K^\partial$. By the minimality assumption, there are $a_1, \dotsc , a_n\in K\setminus \left\{ 0 \right\}$ such that:

$$a_1x_1+\dotsc + a_nx_n = 0$$
Then for any $i\in I$:

$$0=\partial_i\left( \frac{a_1}{a_n}x_1+\dotsc + \frac{a_{n-1}}{a_n}x_{n-1} +x_n\right)=
\partial_i\left(\frac{a_1}{a_n}\right)x_1^{p^{m_i}}+\dotsc +  \partial_i\left(\frac{a_{n-1}}{a_n}\right)x_{n-1}^{p^{m_i}}$$

If some $\partial_i \left( \frac{a_j}{a_n} \right)$ is nonzero, then $x_1^{p^{m_i}},\dotsc , x_{n-1}^{p^{m_i}}$ are linearly dependent over $K$, so by the minimality assumption on $m$ we get that $x_1^{p^{m_i}},\dotsc , x_{n-1}^{p^{m_i}}$ are linearly dependent over $K^\partial=K^p$, hence $x_1^{p^{m_i-1}}, \dotsc , x_{n-1}^{p^{m_i-1}}$ are linearly dependent over $K$. Repeating this reasoning yields that $x_1,\dotsc, x_{n-1}$ are linearly dependent over $K^p$, contrary to the assumption, that they independent over $K^\partial=K^p$.

Therefore for any $i$ we have $\partial_i\left( \frac{a_1}{a_n} \right) = \dotsc = \partial_i\left(\frac{a_{n-1}}{a_n}\right)=0$, hence $\frac{a_1}{a_n},\dotsc , \frac{a_{n-1}}{a_n}\in K^\partial$. By the strictness assumption we get that for some $b_1,\dotsc, b_{n-1}\in K\setminus \left\{ 0 \right\}:$
$$a_1=b_1^p a_n, \dotsc , a_{n-1}=b_{n-1}^p a_n,$$
thus
$$0 = a_1x_1+\dotsc + a_nx_n = a_n \left( b_1^p x_1 + \dotsc +b_{n-1}^p x_{n-1} + x_n \right),$$
hence $x_1, \dotsc , x_n$ are linearly dependent over $K^p=K^\partial$, contrary to the assumption.
\end{proof}

From the proof it is clear that in the linear case (i. e. for every $i$ we have $m_i=0$) we have the following

\begin{lemma}
\label{lindisj2}
Let $\left( K, \partial \right) \subseteq \left( L, \partial \right)$ be an extension of fields with operators linear over the constants. Then $L^\partial$ and $K$ are linearly disjoint over $K^\partial$.
\end{lemma}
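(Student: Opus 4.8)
The plan is simply to rerun the proof of Lemma \ref{lindisj} with all the exponents $m_i$ equal to $0$, and to observe that with this simplification the appeal to strictness of $K$ is no longer needed.

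Concretely, I would again argue by contradiction, taking the least $n>1$ for which there exist $x_1,\dotsc,x_n\in L^\partial$ that are linearly independent over $K^\partial$ but satisfy a nontrivial relation $a_1x_1+\dotsc+a_nx_n=0$ with $a_1,\dotsc,a_n\in K$; minimality of $n$ forces every $a_j$ to be nonzero. After dividing by $a_n$ we may assume $a_n=1$. Applying each operator $\partial_i$ and using $\partial_i\left(x_j\right)=0$ together with $m_i=0$ (so that $x_j^{p^{m_i}}=x_j$), exactly as in the computation in the proof of Lemma \ref{lindisj}, one obtains
$$\partial_i\left(a_1\right)x_1+\dotsc+\partial_i\left(a_{n-1}\right)x_{n-1}=0$$
for every $i\in I$.

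Now if some $\partial_i\left(a_j\right)$ were nonzero, this would be a nontrivial $K$-linear dependence among $x_1,\dotsc,x_{n-1}$; by minimality of $n$ these elements would then be linearly dependent over $K^\partial$, contradicting that $x_1,\dotsc,x_n$, and hence any subtuple, are linearly independent over $K^\partial$. Therefore $\partial_i\left(a_j\right)=0$ for all $i\in I$ and all $j\le n-1$, i.e. $a_1,\dotsc,a_{n-1}\in K^\partial$; and $a_n=1\in K^\partial$ as well. Then the original relation $a_1x_1+\dotsc+a_nx_n=0$ is itself a nontrivial $K^\partial$-linear dependence among the $x_j$, contradicting their choice, and we are done.

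The point worth flagging — rather than a genuine obstacle — is the following. In Lemma \ref{lindisj} one only obtained that $x_1^{p^{m_i-1}},\dotsc,x_{n-1}^{p^{m_i-1}}$ are linearly dependent over $K$, and then had to descend through $p$-th powers and invoke $K^\partial=K^p$ to transfer this back to a statement about the $a_j$. When $m_i=0$ there are no $p$-th powers to unwind: the conclusion $a_j\in K^\partial$ is immediate, and strictness of $K$ plays no role. This is exactly why the hypothesis ``$K$ strict'' can be dropped in the purely linear setting.
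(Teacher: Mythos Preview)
Your proposal is correct and matches the paper's approach exactly: the paper does not give a separate proof of Lemma~\ref{lindisj2} but simply remarks that it is clear from the proof of Lemma~\ref{lindisj} once all $m_i=0$. Your write-up spells this out precisely, and your observation that the final step (once $a_1,\dotsc,a_{n-1}\in K^\partial$) already yields a $K^\partial$-dependence without any appeal to strictness is the right way to see why the hypothesis can be dropped.
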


The strictness assumption in Lemma \ref{lindisj} is necessary (which gives a negative answer to Question 1 in \cite{DerFro}), as shown by the example below.

\begin{example}
Take $K=\mathbb{F}_p\left( X, Y, \lambda, \mu \right), L=\mathbb{F}_p\left( X^{1/p}, Y^{1/p}, \lambda, \mu \right)$ and define a derivation of the Frobenius map on $L$ by setting
$$\partial \left( X^{1/p} \right) = \partial \left( Y^{1/p} \right) = 0, \hspace{0.5cm} \partial \left( \lambda \right) = Y, \hspace{0.5cm} \partial\left( \mu \right) = -X.$$
We will show that $L^\partial$ and $K$ are not linearly disjoint over $K^\partial$. Note that 
$$\partial \left( \lambda X^{1/p} + \mu Y^{1/p} \right) = X\partial \left( \lambda \right)+ Y \partial \left( \mu \right) = 0.$$
Thus, $X^{1/p}, Y^{1/p}, \lambda X^{1/p} + \mu Y^{1/p}$ are elements of $L^\partial$, linearly dependent over $K$. However, they are independent over $K^\partial$: 

Indeed, for any $a, b, c \in K^\partial$, if
$$aX^{1/p}+bY^{1/p} + c\left( \lambda X^{1/p} + \mu Y^{1/p} \right)=0$$
then 
$$\left( a+c\lambda \right) X^{1/p} + \left( b+c\mu \right) Y^{1/p} = 0,$$
but $X^{1/p}$ and $Y^{1/p}$ are linearly independent over $K$, so $a+c\lambda = b+c\mu = 0.$ If $c\neq 0$, then $\lambda = -\frac{a}{c}\in K^\partial$, which is not the case. Thus $c=0$ and therefore $a=b=0$, hence $X^{1/p}, Y^{1/p}, \lambda X^{1/p} + \mu Y^{1/p}$ are linearly independent over $K^\partial$.
\end{example}

Lemma \ref{lindisj} answers Question 2 from \cite{DerFro}, which asks whether strict $\fr^n$-differential fields are differentially perfect. As explained there, it also implies a positive answer to Question 3, i. e. 

\begin{theorem}
\label{qe}
The theory $\fr^{n}-\dcf_p$ eliminates quantifiers in the language $L_{\lambda_0}^{\partial}$.
\end{theorem}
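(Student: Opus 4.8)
The plan is to prove quantifier elimination by the standard embedding criterion, reducing everything to the quantifier elimination of $\fr^n-\dcf_p$ in the richer language $L^{\partial}_{\lambda}$, which is already available by Theorem 2.2(v) of \cite{DerFro}. Recall that a theory $T$ has quantifier elimination if and only if every embedding of a substructure of a model of $T$ into a sufficiently saturated model of $T$ extends to an embedding of the whole model. So fix $(M,\partial),(N,\partial)\models\fr^n-\dcf_p$ with $(N,\partial)$ sufficiently saturated, an $L^{\partial}_{\lambda_0}$-substructure $A$ of $M$, and an $L^{\partial}_{\lambda_0}$-embedding $f\colon A\to N$; the task is to extend $f$ to an $L^{\partial}_{\lambda_0}$-embedding $M\to N$. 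First I would reduce to the case that $A$ is a field: replacing $A$ by its fraction field inside $M$, the operator $\partial$ extends by the quotient rule, and since $A$ is closed under $\lambda_0^M$ one has $A\cap M^p=A^p$, from which a short computation gives $\mathrm{Frac}(A)\cap M^p=\mathrm{Frac}(A)^p$; hence $\mathrm{Frac}(A)$ is again an $L^{\partial}_{\lambda_0}$-substructure of $M$ and $f$ extends to it uniquely (and still respects $\partial$ and $\lambda_0$). Thus we may assume $A$ is an $\fr^n$-differential subfield of $M$ with $A\cap M^p=A^p$; as $M$ is strict this gives $A^{\partial}=A\cap M^{\partial}=A\cap M^p=A^p$, i.e.\ $A$ is strict, and likewise $f(A)$ is a strict $\fr^n$-differential subfield of $N$.

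The core step is to upgrade $f$ to an $L^{\partial}_{\lambda}$-embedding by choosing compatible $p$-bases, and this is exactly where Lemma \ref{lindisj} is used. Applied to the extension $A\subseteq M$ (with $A$ strict), Lemma \ref{lindisj} says that $M^{\partial}$ and $A$ are linearly disjoint over $A^{\partial}$; since $M$ and $A$ are strict this reads: $M^p$ and $A$ are linearly disjoint over $A^p$. Consequently a $p$-basis $\mathcal{B}_0$ of the field $A$ stays $p$-independent in $M$ (an $A^p$-linearly independent subset of $A$ remains $M^p$-linearly independent), so it extends to a $p$-basis $\mathcal{B}_M\supseteq\mathcal{B}_0$ of $M$. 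The same argument applied to $f(A)\subseteq N$ shows that $f(\mathcal{B}_0)$, which is a $p$-basis of $f(A)$, extends to a $p$-basis $\mathcal{B}_N$ of $N$. Now comparing, for $a\in A$, the $\mathcal{B}_M$-expansion of $a$ with its $\mathcal{B}_0$-expansion and using the same linear disjointness, one sees that all $p$-coordinates of $a$ with respect to $\mathcal{B}_M$ already lie in $A$; hence $A$ is an $L^{\partial}_{\lambda}$-substructure of $M$ for the $\lambda$-functions attached to $\mathcal{B}_M$, and symmetrically $f(A)$ is an $L^{\partial}_{\lambda}$-substructure of $N$ for those attached to $\mathcal{B}_N$. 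Since $f$ maps $\mathcal{B}_0$ onto $f(\mathcal{B}_0)$, it respects these $\lambda$-functions, i.e.\ $f$ is an $L^{\partial}_{\lambda}$-embedding.

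To conclude: $M$ and $N$, regarded as $L^{\partial}_{\lambda}$-structures via $\mathcal{B}_M$ and $\mathcal{B}_N$, are models of $\fr^n-\dcf_p$, with $N$ still sufficiently saturated, and $f$ is an $L^{\partial}_{\lambda}$-embedding of the $L^{\partial}_{\lambda}$-substructure $A$ into $N$. By quantifier elimination in $L^{\partial}_{\lambda}$ (Theorem 2.2(v) of \cite{DerFro}), $f$ extends to an $L^{\partial}_{\lambda}$-embedding $g\colon M\to N$. Finally $g$ is automatically an $L^{\partial}_{\lambda_0}$-embedding: in a model of $\fr^n-\dcf_p$ strictness gives $x\in K^p\iff\partial x=0$, and when $\partial x=0$ the value $x^{1/p}$ is just the $\mathcal{B}$-coordinate of $x$ at the empty monomial, so $\lambda_0$ is quantifier-freely definable from $\partial$ and the $\lambda$-functions; hence $g$ preserves $\lambda_0$. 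As $g$ extends $f$, the embedding criterion is verified and $\fr^n-\dcf_p$ eliminates quantifiers in $L^{\partial}_{\lambda_0}$.

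The main obstacle is precisely the passage from an $L^{\partial}_{\lambda_0}$-embedding to an $L^{\partial}_{\lambda}$-embedding: an $L^{\partial}_{\lambda_0}$-substructure only ``knows'' $p$-th roots of $p$-th powers, not a full coherent system of $\lambda$-functions, and to invoke the $L^{\partial}_{\lambda}$-result one must complete a $p$-basis of the substructure to a $p$-basis of the ambient model in a manner compatible with $f$. The obstruction to doing so is whether the $p$-th power subfield of the model is linearly disjoint from the substructure over the latter's $p$-th powers, and this is exactly what Lemma \ref{lindisj} provides (equivalently: strict $\fr^n$-differential fields are differentially perfect), used on both $A\subseteq M$ and $f(A)\subseteq N$. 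Everything else — the reduction to fields, the quotient-rule checks, and the bookkeeping with $p$-adic expansions — is routine.
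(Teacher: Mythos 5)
Your proof is correct and follows essentially the route the paper intends: Lemma \ref{lindisj} shows that the (strict) $\fr^n$-differential subfield generated by an $L^{\partial}_{\lambda_0}$-substructure of a model sits separably inside the model, which upgrades $L^{\partial}_{\lambda_0}$-embeddings to $L^{\partial}_{\lambda}$-embeddings and reduces everything to the known quantifier elimination in $L^{\partial}_{\lambda}$ --- exactly the reduction the paper invokes (it only sketches this, deferring to the explanation in \cite{DerFro}). One cosmetic point: the $\lambda$-functions of the language are the basis-free many-variable ones of \cite[Section 1.8]{ChGeneric} rather than unary functions attached to a chosen $p$-basis, but the same linear disjointness you use shows that your substructure is closed under, and your embedding preserves, those standard $\lambda$-functions, so the argument goes through verbatim.
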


\subsection{Quantifier elimination for more general operators.}
\label{general}
Lemma \ref{lindisj} and Lemma \ref{lindisj2} are very general and actually provide quantifier elimination in more general frameworks, which we will now briefly discuss. 

We begin with the setting of $B$-operators, considered in \cite{BHKK}. Let $k$ be a field, let $B$ be a finite $k$-algebra together with a $k$-algebra homomorphism $\pi\colon B\to k$. A \textit{$B$-operator} on a $k$-algebra $R$ is then a $k$-algebra homomorphism $\partial\colon R\to R\otimes_k B$ such that the following diagram commutes
\begin{center}
  \begin{tikzcd}
R \arrow[r, "\partial"] \arrow[rd, "\operatorname{id}_R"'] & R\otimes_k B \arrow[d, "\operatorname{id}_R\otimes\pi"] \\
                                                           & R   
\end{tikzcd}
\end{center}
Fix a basis $b_0,\dotsc , b_d$ of $B$ over $k$ such that $\pi\left( b_0 \right) = 1$ and $\pi\left( b_i \right) = 0$ for $i>0$. a $B$-operator is then the same as a $d$-tuple of maps $\partial_1,\dotsc , \partial_d\colon R\to R$ such that the map
$$R\ni r \mapsto r\otimes b_0 + \partial_1\left( r \right)\otimes b_1+\dotsc+ \partial_d\left( r \right)\otimes b_d\in R\otimes_k B$$
is a homomorphism of $k$-algebras.

\begin{example}
\label{auto}
If $B=k\times k$ and $b_0=\left( 1, 0 \right), b_1=\left( 0, 1 \right)$, then $\partial_1\colon R\to R$ is a $B$-operator if and only if $\partial_1$ is and $k$-algebra endomorphism.
\end{example}

The \textit{ring of constants} of a $B$-operator $\partial$ on $R$ is defined as
$$R^\partial=\left\{  r\in R\colon \partial\left( r\right) = r\otimes 1_B\right\}.$$
Note that in general this is not the same as the constants in our sense, i. e. the intersection of the kernels of $\partial_1,\dotsc , \partial_d$. For example, consider a $k$-algebra automorphism $\partial_1$ of $R$, which is a $B$-operator as in Example \ref{auto}. Then the ring of constants in our sense is $\left\{ 0 \right\}$, but the ring of constants defined above is the fixed field of $\partial_1$. 

However, if we take the basis $b_0,\dotsc , b_d$ of $B$ over $k$ so that $b_0=1$, then both notions of constants agree. Moreover, it is then easy to calculate that such operators are linear over the constants, so they fit into our considerations. Therefore, we may apply Lemma \ref{lindisj2}, which is in this instance the same as Corollary 4.5 in \cite{BHKK}, namely

\begin{cor}
\label{lindisjB}
Let $\left( K, \partial \right) \subseteq \left( L, \partial \right)$ be an extension of $B$-fields (i. e. fields with $B$-operators). Then $L^\partial$ and $K$ are linearly disjoint over $K^\partial$.
\end{cor}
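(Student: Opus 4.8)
The strategy is to choose a suitable basis of $B$ so that the associated coordinate operators satisfy the hypotheses of Lemma \ref{lindisj2}, and then to invoke that lemma directly. Since $\pi$ is a $k$-algebra homomorphism we have $\pi(1)=1$, so $1\notin\ker\pi$ and we may complete $1$ to a $k$-basis $b_0,b_1,\dotsc,b_d$ of $B$ with $b_0=1$; replacing each $b_i$ for $i>0$ by $b_i-\pi(b_i)\cdot 1$ we may further assume $\pi(b_i)=0$ for $i>0$, without destroying $b_0=1$ or the basis property. Relative to this basis a $B$-operator $\partial$ on a $k$-algebra $R$ is the same datum as a $d$-tuple $\partial_1,\dotsc,\partial_d\colon R\to R$ with
$$\partial(r)=r\otimes b_0+\partial_1(r)\otimes b_1+\dots+\partial_d(r)\otimes b_d,$$
and since $\partial$ is additive and the projections onto the coordinates relative to $b_0,\dotsc,b_d$ are additive, each $\partial_i$ is additive.

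Next I would check that for this basis the ring of constants coincides with the common kernel of the $\partial_i$, and that the $\partial_i$ are linear over the constants. Because $r\otimes 1_B=r\otimes b_0$, linear independence of $b_0,\dotsc,b_d$ shows that $\partial(r)=r\otimes 1_B$ if and only if $\partial_i(r)=0$ for all $i$; hence $R^\partial=\bigcap_{i=1}^d\ker\partial_i$. When $R$ is a field this set is again a field: it is visibly a $k$-subalgebra, and if $a\in R^\partial$ is nonzero then applying $\partial$ to $a\cdot a^{-1}=1$ and using $\partial(a)=a\otimes b_0$ forces $\partial_i(a^{-1})=0$ for all $i$. Finally, for $a\in R^\partial$ and $x\in R$, using that $\partial$ is a ring homomorphism,
$$\partial(ax)=\partial(a)\partial(x)=(a\otimes 1_B)\Bigl(x\otimes b_0+\sum_{i=1}^d\partial_i(x)\otimes b_i\Bigr)=ax\otimes b_0+\sum_{i=1}^d a\,\partial_i(x)\otimes b_i,$$
and comparing coordinates with $\partial(ax)=ax\otimes b_0+\sum_i\partial_i(ax)\otimes b_i$ gives $\partial_i(ax)=a\,\partial_i(x)$ for every $i$. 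Thus each $\partial_i$ is linear over the constants in the sense of the preamble to Lemma \ref{lindisj2} (the case $m_i=0$).

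It remains to observe that the hypothesis that $(K,\partial)\subseteq(L,\partial)$ is an extension of $B$-fields means exactly that the diagram defining $\partial$ on $L$ restricts on $K$ to the one defining $\partial$ on $K$; decomposing along the fixed basis $b_0,\dotsc,b_d$, this says that the coordinate maps $\partial_i$ on $L$ restrict to the $\partial_i$ on $K$. Hence $(K,(\partial_i)_i)\subseteq(L,(\partial_i)_i)$ is an extension of fields with operators linear over the constants, and Lemma \ref{lindisj2} applies verbatim to give that $L^\partial$ and $K$ are linearly disjoint over $K^\partial$, as desired. The only genuine point is the choice of basis with $b_0=1$ — this is what makes the ``$B$-operator'' notion of constants agree with the kernel notion and what forces the exponents $m_i$ to vanish; once that normalization is in place, the corollary is a mechanical translation into the setting of Lemma \ref{lindisj2}.
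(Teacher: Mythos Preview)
Your proof is correct and is exactly the approach taken in the paper: choose a basis of $B$ with $b_0=1$, observe that with this choice the two notions of constants coincide and the coordinate operators $\partial_i$ are linear over the constants (case $m_i=0$), and then invoke Lemma~\ref{lindisj2}. The paper states this more tersely in the paragraph preceding the corollary, while you have (correctly) filled in the verifications that the constants match, that $R^\partial$ is a field, and that $\partial_i(ax)=a\,\partial_i(x)$ for $a\in R^\partial$.
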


Our proof is easier than the proof in \cite{BHKK}. This corollary is then applied as follows. The main result of \cite{BHKK} is a complete classification for which $B$ the theory of $B$-fields has a model companion $B-\dcf$ (see Corollary 3.9 there for details). This happens in essentially two cases, one of which is when $B$ is local and the nilradical of $B$ coincides with the kernel of the Frobenius homomorphisms on $B$ (e. g. the case of derivations). Corollary \ref{lindisjB} is used to prove the following (Theorem 4.11 in \cite{BHKK}):

\begin{theorem}
\label{qeB}
Assume that $B$ is local and the nilradical of $B$ coincides with the kernel of the Frobenius homomorphisms on $B$. Then the theory $B-\dcf$ has quantifier eliminations in the language $L_{\lambda_0}^\partial$.
\end{theorem}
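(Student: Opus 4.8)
The plan is to run the standard embedding criterion for quantifier elimination, using Corollary \ref{lindisjB} precisely to control the constants. Throughout I would use the following facts about $B-\dcf$, all part of the basic theory developed in \cite{BHKK}: it exists (this is the hypothesis on $B$); its models $(M,\partial)$ are \emph{strict}, i.e.\ the constant field $M^\partial$ equals $M^p$ (the natural genericity requirement, generalising differential perfectness of $\dcf_p$), so that in a model ``$t$ is a $p$-th power'' is the quantifier-free $L^\partial$-condition $\bigwedge_i\partial_i(t)=0$ and $M^p$ is a perfect field; $B$-operators extend uniquely to localizations, every $B$-field embeds into a model of $B-\dcf$, the class of $B$-fields is closed under unions of chains, so $M\models B-\dcf$ iff $M$ is existentially closed among $B$-fields; and, after fixing a basis $b_0=1,b_1,\dots ,b_d$ of $B$ over $k$, $B$-operators are additive and linear over the constants in the sense of this section, so Lemma \ref{lindisj2} (equivalently Corollary \ref{lindisjB}) applies to every extension of $B$-fields.

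By the usual test it suffices to show: whenever $(M_1,\partial),(M_2,\partial)\models B-\dcf$ with $(M_2,\partial)$ being $|M_1|^+$-saturated, and $(A,\partial)$ is a common $L_{\lambda_0}^\partial$-substructure with an $L_{\lambda_0}^\partial$-embedding $f\colon A\to M_2$, then $f$ extends to an $L_{\lambda_0}^\partial$-embedding $M_1\to M_2$. First I would reduce to the case that $A$ is a field: $\operatorname{Frac}(A)\subseteq M_1$ is still closed under $\lambda_0^{M_1}$ (if $x,y\in A$ and $x/y\in M_1^p$ then $xy^{p-1}\in M_1^p$, so $\lambda_0(xy^{p-1})\in A$ and $(x/y)^{1/p}=\lambda_0(xy^{p-1})/y\in\operatorname{Frac}(A)$), the $B$-operator and $f$ extend uniquely. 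Write $K$ for the resulting $\lambda_0$-closed $B$-subfield of $M_1$, identified with its image in $M_2$. The decisive consequence of $\lambda_0$-closedness is that $K$ is \emph{strict}: $K^\partial=K\cap M_1^\partial=K\cap M_1^p$ by strictness of $M_1$, and if $x\in K\cap M_1^p$ then $\lambda_0^{M_1}(x)=x^{1/p}\in K$, so $x\in K^p$. Now Corollary \ref{lindisjB} applied to $K\subseteq M_1$ gives that $M_1^\partial$ and $K$ are linearly disjoint over $K^\partial$; since $M_1^\partial=M_1^p$ and $K^\partial=K^p$, this says $M_1^p$ and $K$ are linearly disjoint over $K^p$, which (raising the whole configuration to the $p$-th power, Frobenius being injective) is equivalent to $M_1$ being a \emph{separable} field extension of $K$. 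This is exactly the step where $\lambda_0$ and Corollary \ref{lindisjB} are both essential, and it is what fails for the language $L^\partial$ alone.

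Next, using saturation of $M_2$, it is enough to extend $f$ one element at a time: given $a\in M_1$ I seek $b\in M_2$ realising over $K$ the same quantifier-free $L_{\lambda_0}^\partial$-type as $a$ (this already yields an $L_{\lambda_0}^\partial$-embedding $K\langle a\rangle\to M_2$ over $K$, since $\partial_i$ and $\lambda_0$ are function symbols and so the quantifier-free type determines which $L_{\lambda_0}^\partial$-terms in $a$ over $K$ coincide); iterating by transfinite recursion over a well-ordering of $M_1$, and at each step replacing the generated substructure by its fraction field and then its $\lambda_0$-closure in $M_1$ — the latter only adjoins constants, which are killed by all $\partial_i$ and whose $p$-th roots exist and are again constants since $M_1^p$ is perfect, so one stays among $\lambda_0$-closed $B$-subfields and $f$ extends because $p$-th roots are unique and $M_2^p$ is perfect — produces the desired embedding $M_1\to M_2$. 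To realise $\operatorname{qftp}^{L_{\lambda_0}^\partial}(a/K)$ in $M_2$: since $M_1/K$ is separable, $M_1\otimes_K M_2$ is reduced; the $B$-operators on $M_1$ and $M_2$ agree on $K$ and hence induce one on $M_1\otimes_K M_2$; a minimal prime $\mathfrak p$ is a $B$-ideal because $(M_1\otimes_K M_2)_{\mathfrak p}$ is a field, so the localised operator kills $\mathfrak p(M_1\otimes_K M_2)_{\mathfrak p}$ and this descends; passing to the fraction field $F$ of the quotient gives a $B$-field $F$ containing copies of $M_1$ and $M_2$ amalgamated over $K$. Any finite part of $\operatorname{qftp}^{L_{\lambda_0}^\partial}(a/K)$ can be rewritten — by introducing its finitely many $\lambda_0$-subvalues as extra variables and using the quantifier-free definition of $p$-th powers valid in strict $B$-fields — as an existential $L^\partial$-formula over $K$; it is satisfied by $a$ in $M_1\subseteq F$, hence in $F$, hence, $M_2$ being existentially closed among $B$-fields, in $M_2$; and a realisation in $M_2$ is, again by strictness of $M_2$, automatically a realisation of the original $L_{\lambda_0}^\partial$-formula. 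So $\operatorname{qftp}^{L_{\lambda_0}^\partial}(a/K)$ is finitely satisfiable in $M_2$, and since $|K|\le |M_1|$, it is realised by some $b\in M_2$ by $|M_1|^+$-saturation.

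The main obstacle, and where care is needed, is the bookkeeping around $\lambda_0$: proving that a $\lambda_0$-closed $B$-subfield is strict and feeding this into Corollary \ref{lindisjB} to obtain separability of $M_1/K$ (without which the amalgamation collapses), and verifying that every $\lambda_0$-condition translates faithfully into an existential $L^\partial$-condition when passing between $M_1$, the amalgam $F$, and $M_2$ — which works only because models of $B-\dcf$ are strict. The remaining ingredients — unique extension of $B$-operators to localizations and along separable extensions, descent to quotients by minimal primes, existential closedness of models of $B-\dcf$ among $B$-fields, and perfectness of the constant field — are standard in the theory of $B-\dcf$ from \cite{BHKK} and can be quoted.
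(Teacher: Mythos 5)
Your proposal is correct and follows exactly the route the paper intends: the paper does not reprove this statement but cites it as Theorem 4.11 of \cite{BHKK}, noting only that Corollary \ref{lindisjB} is the key input, and your argument — $\lambda_0$-closed substructures of models are strict, Corollary \ref{lindisjB} then gives linear disjointness of $M_1^p$ and $K$ over $K^p$, hence separability of $M_1/K$, then tensor-product amalgamation over the separable base plus existential closedness and saturation to realize the quantifier-free type — is precisely that proof. Two side remarks are inaccurate but not load-bearing: $M^p$ is not perfect (that would force $M$ perfect), and the $\lambda_0$-closure adjoins $p$-th roots of constants rather than constants; neither is needed, since the fraction field of a $\lambda_0$-closed subring is already $\lambda_0$-closed by the computation you give at the start, so the extra closure step in your recursion is vacuous.
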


Even more generally, we can get a quantifier-elimination result for so-called $\mathcal{B}$-operators considered in \cite{beta-op}. We will not recall the precise definition of a $\mathcal{B}$-operator, as it is a bit involved, but the rough idea is to replace the functor $-\otimes_k B$ in the context of $B$-operators by an appropriate functor $\mathcal{B}$ and the map $\pi$ by a natural transformation $\mathcal{B}\longrightarrow \operatorname{id}$.

Theorem 2.19 in \cite{beta-op} says, that we can replace $\mathcal{B}$ by an isomorphic $\mathcal{B}'$ in a somewhat ``normal form''. This normal form has the property that there is some $k$-algebra $B$ such that $B$-operators ``twisted'' by an appropriate sequence of Frobenius maps are $\mathcal{B}$-operators. In other words, Theorem 2.19 in \cite{beta-op} says that general $\mathcal{B}$-operators are related to $B$-operators in the same way as derivations of the Frobenius map are related to derivations. Now, in order to speak about $\mathcal{B}$-operators as tuples of maps $\partial_1, \dotsc , \partial_d \colon R\to R$ (as opposed to a $k$-algebra homomorphism $\partial\colon R\to \mathcal{B}\left( R \right)$) it is enough to fix a basis $b_0,\dotsc , b_d$ of this algebra $B$ over $k$. If we choose it so that $b_0=1$, then tuple $\partial_1,\dotsc , \partial_d$ is ``$\fr^n$-linear over the constants'' (see the proof of Lemma 4.16 in \cite{beta-op} for details), so they fall under the class of operators considered in Lemma 2.1. 

One of the main results of \cite{beta-op} says that the theory of $\mathcal{B}$-fields has a model companion $\bdcf$, provided that the theory of $B$-field is companionable, where $B:=\mathcal{B}\left( k^{\alg}\right)$. If $B$ satisfies the assumptions of Theorem \ref{qeB}, then we can use our Lemma \ref{lindisj} to show the following
\begin{theorem}
\label{qeBeta}
Assume that $B:=\mathcal{B}\left( k^{\alg}\right)$ satisfies the assumptions of Theorem \ref{qeB}. Then, the theory $\mathcal{B}-\dcf$ eliminates quantifiers in the language $L^\partial_{\lambda_0}$.
\end{theorem}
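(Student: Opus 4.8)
The plan is to imitate the deduction of Theorem~\ref{qe} from Lemma~\ref{lindisj} (explained in \cite{DerFro}) and the proof of Theorem~\ref{qeB} (Theorem~4.11 in \cite{BHKK}), feeding in Lemma~\ref{lindisj} in place of the linear disjointness statement used there. The first step is to put $\mathcal{B}$ into the normal form of Theorem~2.19 in \cite{beta-op}: after replacing $\mathcal{B}$ by an isomorphic functor we may assume that $\mathcal{B}$-operators are $B$-operators ``twisted'' by a fixed sequence of Frobenius maps, where $B=\mathcal{B}(k^{\alg})$. Fixing a $k$-basis $b_0=1,b_1,\dots,b_d$ of $B$, a $\mathcal{B}$-operator on a $k$-algebra $R$ becomes a tuple $\partial_1,\dots,\partial_d\colon R\to R$ of additive maps, each $\fr^{m_i}$-linear over the constants for a suitable $m_i$ (this is the computation from the proof of Lemma~4.16 in \cite{beta-op}), and since $b_0=1$, the ring of constants in the sense of \cite{beta-op} agrees with the common kernel of the $\partial_i$. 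So these operators are exactly of the kind considered in Lemma~\ref{lindisj}. Note also that the model companion $\mathcal{B}-\dcf$ does exist in this setting, because $B$, satisfying the hypotheses of Theorem~\ref{qeB}, makes the theory of $B$-fields companionable by the classification in \cite{BHKK} (Corollary~3.9 there), so the relevant main result of \cite{beta-op} applies.

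Next I would run the standard substructure-embedding test for quantifier elimination. Models of $\mathcal{B}-\dcf$ have separably closed underlying field and are strict, i.e. $M^\partial=M^p$, and $\mathcal{B}-\dcf$ has quantifier elimination in the language $L^\partial_\lambda$ with the full set of $\lambda$-functions (as in \cite{beta-op}, paralleling the $\fr^n-\dcf_p$ case). The decisive algebraic input is that, for these operators, ``strict $\Rightarrow$ differentially perfect''\,: this is the analogue of the use of Lemma~\ref{lindisj} in Section~2, and it implies that an $L^\partial_{\lambda_0}$-substructure $A$ of a model $M$ is automatically strict --- closure of $A$ under $\lambda_0$ gives $A\cap M^p\subseteq A^p$, hence $A^\partial=A\cap M^\partial=A\cap M^p=A^p$ --- and therefore differentially perfect, so that every $\mathcal{B}$-differential extension of $A$, in particular $M$ over $A$, is a separable field extension. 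Now let $M,N\models\mathcal{B}-\dcf$ share a common $L^\partial_{\lambda_0}$-substructure $A$ with $N$ being $|M|^+$-saturated; one extends $\operatorname{id}_A$ to a $\mathcal{B}$-field embedding $M\hookrightarrow N$ by transfinite recursion along a sequence generating $M$ over $A$, where at each step Lemma~\ref{lindisj} guarantees that the newly introduced constants remain linearly disjoint, over the current constant field, from the part already embedded, so the partial map stays a $\mathcal{B}$-field embedding; the geometric axioms of $\mathcal{B}-\dcf$ together with the saturation of $N$ then supply the image of the next generator. Since $A$ is differentially perfect there is no obstruction coming from inseparability, and this yields quantifier elimination in $L^\partial_{\lambda_0}$.

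I expect the real obstacle to be the interplay between purely inseparable extensions and the failure of uniqueness for extensions of $\mathcal{B}$-operators: a $\mathcal{B}$-operator extends canonically only along separable extensions, so a priori an $L^\partial_{\lambda_0}$-substructure could be embedded purely inseparably into a model, in which case the $L^\partial_{\lambda_0}$-structure would not determine the operator --- and hence the type --- on its perfect closure. This is exactly what prevents quantifier elimination in $L^\partial$ without any $\lambda$-functions. The whole point is that Lemma~\ref{lindisj}, through ``strict $\Rightarrow$ differentially perfect'', forces $L^\partial_{\lambda_0}$-substructures of models to be differentially perfect, so no such inseparable pathology arises and $\lambda_0$ alone already suffices. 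A secondary, more clerical point is to check that the normal-form reduction of \cite{beta-op} really places the operators within the scope of Lemma~\ref{lindisj} and that the two notions of ``constants'' coincide; as in the discussion preceding Corollary~\ref{lindisjB}, this is handled simply by taking $b_0=1$ in the chosen basis.
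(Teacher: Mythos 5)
Your proposal follows the same route as the paper: reduce $\mathcal{B}$-operators to the normal form of Theorem 2.19 of \cite{beta-op} with a basis having $b_0=1$ (so that, by the computation in Lemma 4.16 there, the operators are $\fr^{m_i}$-linear over the constants and the two notions of constants agree), feed this into Lemma \ref{lindisj} to get that strict substructures --- in particular $L^\partial_{\lambda_0}$-substructures of models --- are differentially perfect, and then run the same quantifier-elimination argument as for Theorem \ref{qe} and Theorem \ref{qeB}. The paper gives no further details (it states the proof is completely analogous to those two theorems), and your reconstruction supplies exactly the ingredients it has in mind, so it is correct and essentially identical in approach.
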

The proof is completely analogous to the proofs of Theorem \ref{qe} and Theorem \ref{qeB}. Theorem \ref{qeBeta} is new and strengthens Theorem 4.14 from \cite{beta-op}, which states that $\mathcal{B}-\dcf$ eliminates quantifiers in the language $L_{\lambda}^\partial$. This theorem is also a vast generalization of Theorem \ref{qe}.

\section{Wood axioms for \texorpdfstring{$\operatorname{Fr}^n-\operatorname{DCF}_p$}{Fr\^{} n-DCF\_ p}}

We are now going to prove, that the Wood axioms $T$ described in the Introduction do axiomatize $\fr^n-\dcf_p$. The proof consists of two steps. First we prove some sort of primitive element theorem for $\fr^n$-derivations (Proposition \ref{primitive}), which implies that for $\fr^n$-differential fields 1-existential closedness is equivalent to existential closedness (Corollary \ref{ec}). In the second step, we show that Wood axioms, as expected, axiomatize 1-existentially closed $\fr^n$-differential fields (Theorem \ref{main}).

The proof in \cite{Wo1} that $\dcf_p$ can be axiomatized by the Wood axioms relays on the following primitive element theorem (Theorem 1 in \cite{Seidenberg}):
\begin{theorem}
\label{seid}
Let $\left( K, \partial \right)$ be a differentially perfect differential field which has infinite dimension over its constants (equivalenty: every non-zero differential polynomial assumes a non-zero value). Let $K\subseteq L$ be an extension of differential fields and let $a_1,\dotsc , a_n\in L$ be differential-algebraic over $K$. Then there is some $c\in L$ such that $K\left\langle a_1, \dotsc , a_n \right\rangle = K\left\langle c \right\rangle$. Moreover, $c$ can be taken from $Ka_1+\dotsc + K a_n$.
\end{theorem}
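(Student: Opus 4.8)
The plan is to reduce to the case $n=2$ and then exhibit the primitive element as a $K$-linear combination $a_1+t a_2$ for a suitably chosen $t\in K$. For the reduction I would induct on $n$: granting the two-generator case, $K\langle a_1,a_2\rangle=K\langle c_1\rangle$ for some $c_1\in Ka_1+Ka_2$, and since $c_1$ is again differential-algebraic over $K$, applying the inductive hypothesis to $c_1,a_3,\dots,a_n$ yields $c\in Kc_1+Ka_3+\dots+Ka_n\subseteq\sum_i Ka_i$ with $K\langle a_1,\dots,a_n\rangle=K\langle c\rangle$. Along the way I would record two facts that use the hypotheses. First, each $a_i$ being differential-algebraic over $K$, the extension $L:=K\langle a_1,\dots,a_n\rangle$ has finite transcendence degree over $K$ and is finitely generated as a plain field extension, say $L=K(a_1,\dots,a_1^{(s)},a_2,\dots,a_2^{(s)})$ for some $s$. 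Second, $K$ being differentially perfect, the extension $L/K$ is separable, and for every differential-algebraic element one may choose a nonzero vanishing differential polynomial of minimal order whose separant does not vanish at that element; in characteristic $p$ the naive minimal-degree choice could be a $p$-th power in the leading variable, and strictness $K^{\partial}=K^{p}$ supplies the $p$-th roots of coefficients needed to descend to a genuine minimal polynomial.

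For $n=2$, write $L=K\langle a_1,a_2\rangle$ and put $c=a_1+t a_2$ with $t\in K$ to be chosen; since $a_1=c-t a_2$ it suffices to force $a_2\in K\langle c\rangle$, whence $L=K\langle c\rangle$. By the Leibniz rule each $\partial^j c$ is a $K$-linear combination of the $a_1^{(k)},a_2^{(k)}$ with $k\le j$, the coefficients being polynomials in $t,\partial t,\dots,\partial^{j}t$. Feeding $c,\partial c,\dots,\partial^{2s}c$ into these relations and combining them with the algebraic relations that express the high derivatives $a_\ell^{(k)}$ (for $k$ past the order of a minimal vanishing differential polynomial of $a_\ell$) in terms of the low ones, the failure of $a_2\in K\langle c\rangle$ gets encoded as the vanishing at $t$ of a certain resultant-type differential polynomial over $K$ in a single variable, which one checks is not identically zero. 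Since $K$ has infinite dimension over its constants, that is, every nonzero differential polynomial over $K$ assumes a nonzero value on $K$, there is a $t\in K$ outside this ``bad locus'', and for such $t$ one gets $K\langle a_1+t a_2\rangle=L$, as desired.

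The step I expect to be the main obstacle is exactly the claim that the bad-locus object is a genuinely nonzero differential polynomial over $K$: extracting it requires simultaneously choosing the minimal differential polynomials of $a_1$ and $a_2$ and propagating their separants correctly through differentiation, performing the elimination so that all coefficients remain in $K$, and verifying nonvanishing by evaluating at an explicit configuration and invoking minimality together with the separability coming from differential perfectness. In characteristic zero this is comparatively painless — one may even let $t$ be a differential indeterminate and then specialise — but in characteristic $p$ one must keep inseparability under control throughout, and that is precisely the role of the two hypotheses: differential perfectness, equivalently strictness, keeps the relevant algebraic extensions and separants from degenerating, the differential-algebraicity of the $a_i$ makes the elimination finite, and infinite dimensionality over the constants supplies the concrete good parameter $t\in K$. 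The remaining bookkeeping follows Seidenberg's original argument.
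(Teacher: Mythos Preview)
The paper does not itself prove Theorem~\ref{seid}; it is quoted from Seidenberg and then used as a template for the paper's own Proposition~\ref{primitive}, whose proof follows Seidenberg closely. Comparing your proposal with that proof: your architecture (reduce to two generators by induction, take $c=u+\lambda v$, invoke the hypothesis on nonvanishing of differential polynomials to choose a good $\lambda\in K$) matches. The difference lies in how the bad locus is produced, and here you undersell the indeterminate trick. You mention letting $t$ be a differential indeterminate as a characteristic-zero convenience, but this is precisely what Seidenberg does in all characteristics, and it is what dissolves the obstacle you flag. One works over $K\langle\Lambda\rangle$ with $\Lambda$ differentially transcendental over $L$, takes a minimal-total-degree algebraic relation
\[
G\bigl(\Lambda,\dots,\Lambda^{(t)},\,u+\Lambda v,\dots,(u+\Lambda v)^{(s)}\bigr)=0
\]
over $K$, and applies the \emph{partial} derivations $\partial/\partial\Lambda^{(i)}$. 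The chain rule gives directly $v=-\dfrac{(\partial G/\partial X_i)}{(\partial G/\partial Y_i)}$ evaluated at the generic point, and minimality of $G$ forces some $\partial G/\partial Y_i$ to be nonzero; in characteristic $p$ this last step uses differential perfectness to rule out $G$ being a polynomial in $p$-th powers (exactly Claim~1 in the proof of Proposition~\ref{primitive}). One then specializes $\Lambda\mapsto\lambda\in K$ avoiding the single differential-polynomial condition $(\partial G/\partial Y_i)(\bar\lambda)=0$. So the nonvanishing of the bad-locus object is a one-line consequence of minimality of $G$, not a separate resultant computation, and the roles of the two hypotheses are isolated cleanly: differential perfectness handles the $p$-th-power degeneration, and the ``no identically vanishing differential polynomial'' hypothesis is used only at the final specialization. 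Your elimination approach would also work, but this is why Seidenberg's argument is short.
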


It seems that the above fact does not hold for $\fr^n$-derivations, but in any case, we have the following weaker version, which suffices for our purposes. The proof is similar to the proof of Theorem \ref{seid} given in \cite{Seidenberg}, although additional steps are needed.

\begin{prop}
\label{primitive}
Let $n>0$ and let $\left( K, \partial \right)$ be a differentially perfect $\fr^n$-differential field such that that every non-zero differential polynomial assume a non-zero value. Let $K\subseteq L$ be an extension of $\fr^n$-differential fields and let $a_1,\dotsc , a_m\in L$ be differential-algebraic over $K$. Then there is some $c\in L$ and some $i>0$ such that $a_1^{q^i}, \dotsc , a_m^{q^i} \in K\left\langle c \right\rangle$. Moreover, $c$ can be taken from $Ka_1+\dotsc + K a_n$.
\end{prop}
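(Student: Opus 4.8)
The plan is to mimic Seidenberg's proof of Theorem \ref{seid}, reducing to the case $m=2$ by induction and then, for two elements $a=a_1$ and $b=a_2$, producing a single generator for a sufficiently high Frobenius twist of the differential field they generate. The appearance of the twist $q^i$ is forced precisely by the failure of the division properties for $\fr^n$-differential polynomials described in the Introduction: when one clears denominators or differentiates, the degrees in the lower-order variables grow by factors of $q$, so to get a clean algebraic relation one must raise things to a $q$-th power several times. Concretely, for the two-element step I would look for $c$ of the form $c = a + tb$ with $t\in K$ and argue that for all but finitely many choices of $t$ (inside the infinite constant field, which is available since $K^\partial=K^p$ is infinite — here we use that $K$ has infinite dimension over its constants, so in particular $K^p$ is infinite), some twist $a^{q^i}, b^{q^i}$ lies in $K\langle c\rangle$.

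\textbf{Key steps.} First, set up notation: since $a$ and $b$ are differential-algebraic over $K$, there is a least $r$ such that $a, \partial a, \dotsc, \partial^r a$ are algebraically dependent over $K$, and similarly for $b$; working over the differential field $K\langle a \rangle$ I would arrange that $b$ satisfies a nonzero differential polynomial equation $P(b)=0$ over $K\langle a\rangle$ of minimal order $s$ with $\frac{\partial P}{\partial X^{(s)}}\neq 0$, using that every nonzero differential polynomial over $K$ takes a nonzero value (so that the coefficients stay nonzero). Second — the heart of the matter — with $c = a + tb$ one wants to recover $a^{q^i}$ and $b^{q^i}$ rationally (differentially) from $c$. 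The standard trick: write down the differential equations satisfied by $a$ and by $b$, substitute $a = c - tb$, and treat the resulting system as equations in $b$ and its derivatives with coefficients in $K\langle c\rangle$; a generic $t$ makes the relevant resultant/Wronskian-type determinant nonzero, so that $b$ (or a twist $b^{q^i}$, after clearing the Frobenius-inflated denominators) is algebraic of degree one, i.e. lies in $K\langle c\rangle$, and then $a^{q^i} = c^{q^i} - t^{q^i} b^{q^i} \in K\langle c\rangle$ as well. Third, the induction: given the result for $m-1$ elements, apply the two-element case to a generator $c'$ of a twist of $K\langle a_1,\dotsc,a_{m-1}\rangle$ together with $a_m$, and compose the twists; one also has to check that the ``moreover'' clause survives composition, which it does because $c'$ can be chosen in $Ka_1+\dotsc+Ka_{m-1}$ and the final $c$ in $Kc' + Ka_m$, and a $K$-linear combination of $q^i$-th powers is a $q^i$-th power of a $K$-linear combination only up to another twist — so one absorbs that into a larger $i$ as well.

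\textbf{Main obstacle.} The delicate point is controlling the Frobenius twists uniformly: each time one divides by a coefficient or by a derivative $\frac{\partial P}{\partial X^{(s)}}(b)$, or each time one passes from a polynomial relation over $K$ to one over $K^p$ (as in the proof of Lemma \ref{lindisj}), the exponent $i$ must be incremented, and one must verify this process terminates with a single finite $i$ working for all of $a_1,\dotsc,a_m$ simultaneously. I expect the bookkeeping here — showing that finitely many applications of ``raise to the $q$-th power to make a relation defined over $K$ rather than over $K^{1/q}$, or to linearize a denominator'' suffice, and that the genericity of $t$ can be arranged over the (infinite) constant field $K^p$ rather than over $K$ itself — to be the part requiring genuine care, and it is exactly the place where Seidenberg's original argument needs the ``additional steps'' the statement alludes to.
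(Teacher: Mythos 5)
Your overall strategy (reduce to two elements, take $c=u+\lambda v$ with a suitably generic multiplier, show that some Frobenius twist of $u,v$ lies in $K\left\langle c\right\rangle$) is indeed the paper's, and Seidenberg's, but the two places where the real work happens are missing or wrong. The heart of the characteristic-$p$ argument is not that ``a generic multiplier makes a resultant/Wronskian-type determinant nonzero'': you must first rule out that the minimal relation $G$ satisfied by $\Lambda,\dotsc,\Lambda^{(t)},u+\Lambda v,\dotsc,(u+\Lambda v)^{(s)}$ over $K$ (with $\Lambda$ a differential indeterminate) is a polynomial in $p$-th powers of its arguments, in which case every such determinant or partial derivative vanishes identically, for \emph{every} choice of multiplier. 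The paper takes $G$ of minimal total degree, differentiates the relation with respect to $\Lambda^{(i)}$ — the twisted chain rule gives $\frac{\partial}{\partial\Lambda^{(i)}}\bigl(u+\Lambda v\bigr)^{(k)}=\delta_{ik}v^{q^k}$, which is where the exponent $q^i$ actually comes from, not from clearing denominators — and then in its Claim 1 uses differential perfectness via Lemma \ref{lindisj}: if all $\partial G/\partial Y_i$ vanished, $G$ would be a polynomial in $p$-th powers, linear disjointness would descend the resulting linear dependence from $K$ to $K^p$, and extracting $p$-th roots would yield a relation of strictly smaller total degree, a contradiction. Your sketch mentions ``passing from a relation over $K$ to one over $K^p$'' only as bookkeeping on the exponent $i$, but this is the central obstruction and requires the minimality argument; without it nothing forces the nonvanishing you need.

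The second gap is the choice of the multiplier. You propose to take it ``generically, with finitely many exceptions, in the infinite constant field $K^p$''; this neither matches the hypothesis nor works. After solving $v^{q^i}=-\frac{\partial G}{\partial X_i}(\bar\Lambda)\big/\frac{\partial G}{\partial Y_i}(\bar\Lambda)$ one must specialize $\Lambda\mapsto\lambda\in K$ so that the denominator survives, and the condition on $\lambda$ is a \emph{differential} polynomial inequation (it involves $\partial\lambda,\partial^2\lambda,\dotsc$), so ``all but finitely many'' is the wrong notion of genericity, and restricting $\lambda$ to constants sets all derivative coordinates to zero and can make the condition unsatisfiable: a nonzero differential polynomial such as $X'$ vanishes at every constant. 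This is exactly why the hypothesis is that every nonzero differential polynomial over $K$ assumes a nonzero value (equivalently, infinite degree over the constants); the paper's Claim 2 rewrites the needed inequation, whose coefficients a priori lie in $L$, as a single nonzero differential polynomial inequation over $K$ by expanding along a transcendence basis, and only then invokes that hypothesis to find $\lambda\in K$. Your reduction to $m=2$ and the ``moreover'' clause are fine (the paper is equally brief about composing twists in the induction), but as written the proposal assumes precisely the two points the proof has to establish.
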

\begin{proof}
It is enough to prove the theorem for $m=2$. Let $u, v\in L$ be differentially algebraic over $K$. Let $\Lambda$ be a differentially transcendetal indeterminate. The elements $u, v, \Lambda\in L\left\langle \Lambda \right\rangle$ are differentially algebraic over $K\left\langle \Lambda \right\rangle$, hence so is $u+\Lambda v$, thus there is some non-zero polynomial $G\in K\left[ X_0, \dotsc, X_t, Y_0, \dotsc , Y_s\right]$ such that
\[
G\left( \Lambda, \dotsc, \Lambda^{\left( t \right)},u+\Lambda v, \dotsc , \left( u+\Lambda v\right)^{\left( s \right)} \right)= 0. \tag{$\ast$}
\]
Without out loss of generality assume that the total degree of $G$ is minimal.

Fix $i\in\left\{ 0, \dotsc , s \right\}$. To simplify the notation, we set
$$\bar{\Lambda} = \left( \Lambda, \dotsc, \Lambda^{\left( t \right)},u+\Lambda v, \dotsc , \left( u+\Lambda v\right)^{\left( s \right)} \right).$$
Let $\frac{\partial}{\partial \Lambda^{\left( i \right)}}$ denote the obvious partial derivation on the field $L \left\langle\Lambda \right\rangle$. Using the chain rule, we may differentiate the equality $\left( \ast \right)$ with respect to $\Lambda^{\left( i \right)}$
\begin{align*}
0&=\frac{\partial}{\partial \Lambda^{\left( i \right)}}G\left( \Lambda, \dotsc, \Lambda^{\left( t \right)},u+\Lambda v, \dotsc , \left( u+\Lambda v\right)^{\left( s \right)} \right) \\ 
&=\sum_{k=0}^t \frac{\partial G}{\partial X_k}\left( \bar{\Lambda} \right) \cdot \frac{\partial \Lambda^{\left( k \right)}}{\partial \Lambda^{\left( i \right)}} + \sum_{k=0}^s \frac{\partial G}{\partial Y_k}\left( \bar{\Lambda} \right) \cdot \frac{\partial \left( u+\Lambda v\right)^{\left( k \right)} }{\partial \Lambda^{\left( i \right)}} \\
&= \frac{\partial G}{\partial X_i}\left( \bar{\Lambda} \right) + \frac{\partial G}{\partial Y_i}\left( \bar{\Lambda} \right) \cdot v^{q^i},
\end{align*}
since for $i>0$ we have
$$\frac{\partial}{\partial \Lambda^{\left( i \right)} }\left( u+\Lambda v\right)^{\left( k \right)} = \frac{\partial}{\partial \Lambda^{\left( i \right)} }\left( u^{\left( k \right)}+ \Lambda^{\left( k \right)}v^{q^k} + \Lambda^{q^k}v^{\left( k \right)} \right) = \delta_{ik} v^{q^k}$$
and for $i=0$
$$\frac{\partial}{\partial \Lambda}\left( u+\Lambda v\right)^{\left( k \right)} = \frac{\partial}{\partial  \Lambda}\left( u^{\left( k \right)}+ \Lambda^{\left( k \right)}v^{q^k} + \Lambda^{q^k}v^{\left( k \right)} \right) = \delta_{0k} v^{q^k}+ q^k\Lambda^{q^k-1}v^{\left( k \right)}=\delta_{0k} v^{q^k},$$
where $\delta_{ik}$ is the Kronecker delta.
\vspace{0.5cm}

\textbf{Claim 1.} There is some $i\in\left\{ 0, \dotsc , s \right\}$ such that $\frac{\partial G}{\partial Y_i}$ is not the zero polynomial.

\textit{Proof of Claim 1.: } Assume this is not the case. Then also each $\frac{\partial G}{\partial X_i}$ is the zero polynomial - otherwise, $\frac{\partial G}{\partial X_i}\left( \bar{\Lambda} \right) = 0$ by the formula above, which would contradict the minimality assumptions on $G$. Thus $G$ is a polynomial in $X_0^p,\dotsc , X_t^p, Y_0^p, \dotsc Y_s^p$. Therefore $( \ast )$ expresses the linear dependence over $K$ of $p$th powers of some monomials in $\Lambda, \dotsc, \Lambda^{\left( t \right)},u+\Lambda v, \dotsc , \left( u+\Lambda v\right)^{\left( s \right)}$. Lemma \ref{lindisj} implies that the $p$th powers of these monomials are already linearly dependent over $K^p$, hence the monomials are linearly dependent over $K$. Since $\Lambda, \dotsc, \Lambda^{\left( t \right)}$ are algebraically independent over $K$, some $\left( u+\Lambda v\right)^{\left( i\right)}$ must appear in this dependence relation, thus we get a relation of the form
$$F\left( \Lambda, \dotsc, \Lambda^{\left( t \right)},u+\Lambda v, \dotsc , \left( u+\Lambda v\right)^{\left( s \right)} \right)= 0,$$
where $F$ is a multivariate polynomial over $K$ with lower total degree than $G$, contrary to the minimality assumption. \qed
\vspace{0.5cm}

Thus, for some $i\in\left\{ 0, \dotsc , s \right\}$ we have the following equality:
$$v^{q^i} = - \frac{\frac{\partial G}{\partial X_i} \left( \bar{\Lambda} \right) }{ \frac{\partial G}{\partial Y_i}\left( \bar{\Lambda} \right) }.$$

\textbf{Claim 2.} There is some $\lambda\in K$ such that 
$$\frac{\partial G}{\partial Y_i} \left( \lambda, \dotsc, \partial^t\left( \lambda \right),u+\lambda v, \dotsc , \partial^s\left( u+\lambda v\right)\right)\neq 0.$$

\textit{Proof of Claim 2.: } Here $\frac{\partial G}{\partial Y_i}$ is a non-zero differential polynomial, but not over $K$, so we cannot use the assumption about $K$ directly. However, expanding $u, v$ according to some transcendence basis of $L$ over $K$, we may rewrite the above (desired) inequality as a conjunction of polynomial inequalities over $K$ of the form
$$\bigwedge_{k=0}^N f_k\left( \lambda \right) \neq 0$$
for some $f_0,\dotsc , f_N\in K\left\{ X \right\}$. But this conjunction is equivalent to the single inequality $\prod_{k=0}^N f_k\left( \lambda \right) \neq 0$, to which the assumption about $K$ applies, yielding the desired $\lambda$.
\qed
\vspace{0.5cm}

For this $\lambda$ one has therefore $v^{q^i}\in K\left\langle u+\lambda v \right\rangle$ and it follows that $u^{q^i}\in K\left\langle u+\lambda v \right\rangle$, which finishes the proof.
\end{proof}

\begin{remark}
Theorem 2.2(v) in \cite{DerFro} states (in our notation) that $\fr^n-\dcf_p$ is a model companion of $\fr^n-\operatorname{DF}$ in $L^\partial_\lambda$. This is not true however, as proved by the following example. Let $K$ be a model of $\fr^n-\operatorname{DF}$ which is not strict (e. g. any imperfect field with the zero $\fr^n$-derivation) and $K\subseteq L$ be an $L^\partial$-extension of $K$ such that $L$ is a model of $\fr^n-\dcf_p$. Since $L$ is strict and $K$ is not, this extension is not separable (as $L$ contains $p$-roots not present in $K$), thus $K\subseteq L$ is not an $L^\partial_{\lambda}$-extension. Hence $\fr^n-\dcf_p$ is not a model companion of $\fr^n-\operatorname{DF}$ in $L^\partial_\lambda$.

It can be easily fixed by replacing $\fr^n-\operatorname{DF}$ by $\fr^n-\operatorname{DF} + \mbox{ ``strictness'' }$ and the proof from \cite{DerFro} goes through.
\end{remark}

For the above reasons, in the following few auxiliary facts about existential closedness, the strictness assumption is present.

\begin{lemma}
\label{redu}
Let $K$ be a strict $\fr^n$-differential field and let $\phi\left( x \right)$ be a quantifier-free $L^\partial_{\lambda_0}$-formula with parameters from $K$, which has a solution $a$ in some $L^\partial_{\lambda_0}$-extension $L\supseteq K$. Then there exists a quantifier free $L^\partial$-formula $\psi \left( x \right)$ with parameters from $K$, satisfiable in some $L^\partial_{\lambda_0}$-extension of $K$, which has the following property: in any $L^\partial_{\lambda_0}$-extension $L\supseteq K$ (equivalenty: $L^\partial$-extension) we have $L\models \psi \left( x \right) \rightarrow \phi\left( x \right)$.
\end{lemma}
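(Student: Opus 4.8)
The plan is to strip the occurrences of $\lambda_0$ off $\phi$ one at a time, by induction on the number $N$ of occurrences of $\lambda_0$ in $\phi$. If $N=0$ there is nothing to prove: $\phi$ is already an $L^\partial$-formula, and the given extension $L$ witnesses that $\psi:=\phi$ is satisfiable. For the inductive step I fix an \emph{innermost} occurrence $\lambda_0(s)$, so that $s$ is an honest $L^\partial$-term with parameters from $K$, and I set $b:=s(a)\in L$ and $\beta:=\lambda_0^L(b)\in L$. The two elementary facts I will use repeatedly, valid in every $\fr^n$-differential field $M$ and every $c\in M$, are: (i) $M^p$ is contained in the constants, so $\partial(c)\neq 0$ forces $c\notin M^p$ and hence $\lambda_0^M(c)=0$; and (ii) $\lambda_0^M(d^p)=d$ for all $d$, so $r^p=c$ forces $\lambda_0^M(c)=r$.

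The routine cases are those in which $\beta=0$ for a reason visible to the language $L^\partial$. If $b=0$, replace the chosen occurrence of $\lambda_0(s)$ by $0$ and conjoin the atomic formula $s(x)=0$; if $\partial(b)\neq 0$, replace it by $0$ and conjoin $\partial(s(x))\neq 0$. By (i) the resulting quantifier-free $L^\partial_{\lambda_0}$-formula $\phi_1(x)$ has one fewer occurrence of $\lambda_0$, is still satisfied by $a$ in $L$, and satisfies $\phi_1\to\phi$ in \emph{every} $\fr^n$-differential field; the inductive hypothesis applied to $\phi_1$ then produces the desired $\psi$, since $\psi\to\phi_1\to\phi$. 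Similarly, if $\beta\neq 0$ (so that $\beta^p=b=s(a)$) and moreover $\beta=r(a)$ for some $L^\partial$-term $r$ over $K$, then replacing $\lambda_0(s)$ by $r$ and conjoining $r(x)^p=s(x)$ gives, by (ii), a formula $\phi_1$ with $\phi_1\to\phi$ everywhere and one fewer $\lambda_0$, again satisfied by $a$; conclude by induction.

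What remains — and what I expect to be the main obstacle — is the situation in which $\beta\neq 0$ but $\beta$ is genuinely new, i.e.\ $\beta\notin K\langle a\rangle$, so that the needed $p$-th root of $s(a)$ generates a nontrivial purely inseparable extension of $K\langle a\rangle$ that is not $L^\partial$-generated by $a$ (together with the mirror difficulty that one cannot a priori exclude $\partial(b)=0\neq b$ with $b\notin L^p$, which can occur because $L$ need not be strict). Here I would exploit that $\psi$ only has to be \emph{satisfiable in some} $L^\partial_{\lambda_0}$-extension of $K$ and only has to \emph{imply} $\phi$, so one is free to change the witnessing pair $(L,a)$ first. Concretely, pass to the $\fr^n$-differential subfield $M\subseteq L$ generated over $K$ by $a$ and by all the roots $\lambda_0^L(\cdot)$ arising when $\phi$ is evaluated at $a$. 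Since $K$ is strict, Lemma~\ref{lindisj} shows every $\fr^n$-differential extension of $K$ is separable, hence $K\cap M^p=K^p$ and $M$ is again an $L^\partial_{\lambda_0}$-extension of $K$; also $a$ remains a solution of $\phi$ in $M$, and now every relevant value of $\lambda_0$ lives in the finitely generated extension $M/K\langle a\rangle$.

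The technical heart is then to encode these finitely many "purely inseparable root data" back into a \emph{single-variable} quantifier-free $L^\partial$-formula. For this I would run a primitive-element style argument in the spirit of Proposition~\ref{primitive} (reducing a tuple of generators of $M$ to one over $K$, after possibly enlarging to a background extension where the needed genericity/"every nonzero differential polynomial takes a nonzero value" holds), combined with Lemma~\ref{lindisj} to pin down exactly which differential polynomials in the chosen generator are $p$-th powers in $M$; for the fragile zero case one additionally perturbs $a$ along a differentially transcendental direction to make $\partial(s(x))\neq 0$ without disturbing the other atomic conditions, thereby forcing $\lambda_0(s(x))=0$ for an $L^\partial$-reason. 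Once this is in place, the chosen occurrence of $\lambda_0(s)$ is converted into an $L^\partial$-term together with a $p$-th-power constraint, $N$ drops, and the induction closes. The delicate point, and the step I expect to consume most of the work, is precisely this single-variable encoding in the genuinely-new-root case.
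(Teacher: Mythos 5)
Your routine cases are correct, and they are essentially the paper's own device: when the value of an innermost $\lambda_0(s)$ at the witness is zero for a reason visible in $L^\partial$ (namely $s(a)=0$ or $\partial(s(a))\neq 0$), or when the root is itself an $L^\partial$-term $r(a)$, you replace the occurrence and guard it by $s(x)=0$, $\partial(s(x))\neq 0$, or $r(x)^p=s(x)$; this is the same bookkeeping the paper performs by raising equalities to Frobenius powers and adding formulas $\partial(t(x))=0$. But your proposal stops being a proof exactly at the two cases you yourself flag: (a) the root $\beta=\lambda_0(s(a))$ is genuinely new, typically with further $\partial$'s applied above the $\lambda_0$; and (b) $\beta=0$ although $s(a)\neq 0$ and $\partial(s(a))=0$ (possible since $L$ need not be strict). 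For these you only outline a plan (a primitive-element argument in the spirit of Proposition \ref{primitive}, plus perturbing the witness), and this is not a deferrable detail: it is where the entire content of the lemma sits, and the sketched strategy cannot be completed at the level of generality you are aiming at.

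Concretely, for case (a) take $K$ perfect with $\partial=0$ (a strict $\fr^n$-differential field) and $\phi(x):\ \partial(\lambda_0(x))=1$, which has the solution $e^p$ in the extension $K(e)$ with $\partial(e)=1$. If $\psi(x)$ were a quantifier-free $L^\partial$-formula over $K$, realized by some $a'$ in some extension and implying $\phi$ in every extension, then $\psi(a')$ would also hold in the substructure $F=K\langle a'\rangle$, hence so would $\phi(a')$; but then $a'\in F^p$, so $\partial(a')=0$ and $F=K(a')$, on which $\partial$ vanishes identically (its kernel is a subfield containing $K$ and $a'$), contradicting $\partial(\lambda_0(a'))=1$. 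So in the genuinely-new-root case no single-variable encoding can force the behaviour of $\partial(\lambda_0(\cdot))$ in \emph{all} extensions; one needs extra existential witness variables, or the implication only over strict extensions together with suitable parameters existing in $K$ (as in the intended applications, where $K$ is existentially closed). Case (b) is similarly obstructed: $\phi(x):\ x\neq 0\wedge\partial(x)=0\wedge\lambda_0(x)=0$ is solvable in a non-strict extension, yet any satisfiable quantifier-free $L^\partial$-formula persists into a strict model of $\fr^n-\dcf_p$ over $K$, where $\phi$ has no solution, so no perturbation of the witness can make the zero case ``visible to $L^\partial$'' while keeping the implication in every extension. Be aware that the paper's own proof is only a short sketch which is silent on exactly these two points (Frobenius powers do not commute with $\partial$, and $\partial(t(x))=0$ detects $p$-th powers only in strict fields), so the gap you left open is not one that the routine bookkeeping can close; any honest completion has to modify what is being proved (e.g.\ restrict the implication to strict extensions or to $K$ itself, or allow auxiliary variables) before the induction you set up can go through.
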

\begin{proof}
When plugging $a$ into $\phi\left( x \right)$, any instance of $\lambda_0\left( b \right)$ becomes either zero or $b^{1/p}$. Applying a high enough power of the Frobenius automorphisms to every equality and adding some formulas of the form $\partial \left( t\left( x \right) \right) =0$ for some $L^\partial$-terms $t$ (this formulas keep track whether $a\models\lambda_0\left( t\left( x \right) \right) = 0$), we get a quantifier free $L^\partial$-formula $\psi \left( x \right)$ with the following property: in any $L^\partial_{\lambda_0}$-extension $L\supseteq K$ (equivalenty: $L^\partial$-extension) we have $L\models \psi \left( x \right) \rightarrow \phi\left( x \right)$, as desired.
\end{proof}

\begin{prop}
\label{1ec}
In the language $L^\partial_{\lambda_0}$, strict 1-existentially closed $\fr^n$-differential fields are existentially closed.
\end{prop}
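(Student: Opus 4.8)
The plan is to imitate Wood's argument in \cite{Wo1} that his axioms work for $\dcf_p$, reducing a quantifier-free condition in $m$ variables to one in a single variable by means of the primitive element theorem (Proposition \ref{primitive}), just as that argument uses Seidenberg's theorem (cf. Theorem \ref{seid}). The one new feature is that Proposition \ref{primitive} captures the $a_j$ only up to a $q^i$-th power, and recovering those $q^i$-th roots is exactly what the symbol $\lambda_0$ is there for. So let $K$ be strict and $1$-existentially closed, and let $\phi_0(\bar x)$, with $\bar x=(x_1,\dots,x_m)$, be a quantifier-free $L^\partial_{\lambda_0}$-formula over $K$ having a solution in some $L^\partial_{\lambda_0}$-extension of $K$. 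By the evident tuple version of Lemma \ref{redu}, it is enough to solve in $K$ a quantifier-free $L^\partial$-formula $\phi(\bar x)$ over $K$ which has a solution $\bar a=(a_1,\dots,a_m)$ in some $L^\partial_{\lambda_0}$-extension $L\supseteq K$ and every solution of which is a solution of $\phi_0$. One further reduces to the case where $a_1,\dots,a_m$ are differentially algebraic over $K$: by induction on $m$ one peels off any coordinate $a_i$ that is differentially transcendental over the differential field generated by the remaining coordinates (any differential equation of $\phi$ involving $x_i$ then holds identically in $x_i$, so $x_i$ is absorbed into a quantifier-free condition on the other variables), using at the end that $K$, being $1$-existentially closed, has the property that every nonzero differential polynomial over $K$ assumes a nonzero value in $K$ — in one variable this condition is realised in $K\langle\eta\rangle$ for a differential indeterminate $\eta$, and the several-variable version follows by an easy induction.

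Now $K$ meets the hypotheses of Proposition \ref{primitive}: it is differentially perfect because it is strict, and every nonzero differential polynomial over it takes a nonzero value in $K$. Applying it to $a_1,\dots,a_m$ produces $c\in L$ and $i>0$ with $a_j^{q^i}\in K\langle c\rangle$ for all $j$; write $a_j^{q^i}=R_j(c)$ with $R_j=P_j/Q_j$, $P_j,Q_j\in K[Y_0,\dots,Y_N]$, understood on the locus $Q_j(c,\partial c,\dots,\partial^{N}c)\neq0$.

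Here is the crux. Since $q^i=p^{ni}$ with $ni\ge1$, the element $a_j$ is obtained from $R_j(c)$ by applying $\lambda_0$ exactly $ni$ times: each of the successive values $R_j(c)=a_j^{p^{ni}},\,a_j^{p^{ni-1}},\dots,a_j^{p}$ is a $p$-th power in $L$, so $\lambda_0^{\circ ni}(R_j(c))=a_j$ in $L$, where $\lambda_0^{\circ ni}$ denotes the $ni$-fold iterate of $\lambda_0$. Accordingly I would introduce the quantifier-free $L^\partial_{\lambda_0}$-formula in the single variable $y$
\[
\chi(y)\ :=\ \bigwedge_{j=1}^{m}\Big(Q_j(y,\partial y,\dots)\neq0\ \wedge\ \big(\lambda_0^{\circ ni}(R_j(y))\big)^{q^i}=R_j(y)\Big)\ \wedge\ \phi\big(\lambda_0^{\circ ni}(R_1(y)),\dots,\lambda_0^{\circ ni}(R_m(y))\big),
\]
where $\phi(t_1,\dots,t_m)$ is the formula obtained from $\phi$ by substituting the $L^\partial_{\lambda_0}$-terms $t_j$ for $x_j$ (and $\partial^{l}t_j$ for $\partial^{l}x_j$), all displayed identities being cleared of denominators in the obvious way. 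By the computation just made, $\chi(c)$ holds in $L$, so $\chi$ is solvable in an $L^\partial_{\lambda_0}$-extension of $K$; since $K$ is $1$-existentially closed, $\chi$ has a solution $c_0\in K$. Then $a_j':=\lambda_0^{\circ ni}(R_j(c_0))\in K$ are well defined (because $Q_j(c_0,\partial c_0,\dots)\neq0$), and the last conjunct of $\chi(c_0)$ asserts precisely that $\phi(a_1',\dots,a_m')$ holds in $K$; hence $\phi_0$ is solved in $K$ too.

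The only genuine obstacle — and the reason the statement is about $L^\partial_{\lambda_0}$ and not $L^\partial$ — is the exponent $q^i$ in Proposition \ref{primitive}: over an imperfect $\fr^n$-differential field these $q^i$-th roots need not exist inside an $L^\partial$-structure at all, and $\lambda_0$ is exactly what lets us name them definably. The remaining ingredients (the tuple form of Lemma \ref{redu}, the reduction to differentially algebraic tuples, and checking that the auxiliary formulas stay solvable in $L^\partial_{\lambda_0}$-extensions of $K$) amount to routine bookkeeping, closely parallel to the $\dcf_p$ case in \cite{Wo1}.
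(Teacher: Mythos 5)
Your proof is correct and follows essentially the same route as the paper's: reduce to a quantifier-free $L^\partial$-formula via Lemma \ref{redu}, pass to a differentially algebraic solution tuple, apply Proposition \ref{primitive} and recover the coordinates as iterates of $\lambda_0$ applied to differential rational functions of the primitive element, then invoke $1$-existential closedness on the resulting one-variable $L^\partial_{\lambda_0}$-formula. The minor deviations (your inductive peeling argument for differential algebraicity in place of the paper's compactness-style footnote, and the extra conjunct forcing $R_j(y)$ to be a $q^i$-th power) are cosmetic.
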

\begin{proof}
Let $K$ be a strict $\fr^n$-differential field, which is 1-existentially closed  in $L^\partial_{\lambda_0}$. Let $\phi$ be an existential $L^\partial_{\lambda_0}$-sentence with parameters from $K$ and let $K\subseteq L$ be an extension of $\fr^n$-differential fields such that $L$ satisfies $\phi$. By Lemma \ref{redu} we may assume that $\phi$ is an $L^\partial$-sentence and by the standard tricks we may assume that $\phi$ is of the form $\left( \exists x \right) \left( \bigwedge_{i=1}^k f_i\left( x \right) = 0 \right)$ where $x$ is a tuple of variables and $f_1,\dotsc , f_k$ are multivariate differential polynomials with coefficients from $K$. 

Take a tuple $a = \left( a_1,\dotsc a_m \right)$ from $L$ such that $f_1\left( a \right) = \dotsc = f_k\left( a \right) = 0$. We may assume that every element of $a$ is differentially algebraic over $K$.\footnote{No finite set of formulas in variable $x$ can imply the condition ``$x$ is differentially transcendental over $K$'', thus the set of formulas $\left\{ f_1\left( x \right) = \dotsc = f_k\left( x \right) = 0\right\}$ extends to a complete type over $K$, whose realization over $K$ is differentially algebraic over $K$.} Then, by Lemma \ref{primitive} there is some single element $c\in L$ such that for each $i$ we have $a_i^{q^N}\in K\left\langle c \right\rangle$ for some natural number $N$. Thus there are univariate differential polynomials $g_1, h_1, \dotsc , g_m, h_m$ over $K$, such that 
$$a_1^{q^N}=\frac{g_1\left( c \right)}{h_1\left( c \right)}, \dotsc , a_m^{q^N}=\frac{g_m\left( c \right)}{h_m\left( c \right)}$$
or in other words 
$$a_1=\lambda_0^{nN}\left( \frac{g_1\left( c \right)}{h_1\left( c \right)} \right) , \dotsc , a_m=\lambda_0^{nN}\left( \frac{g_m\left( c \right)}{h_m\left( c \right)} \right),$$
since $q=p^n$. Consider the following formula:
$$\psi\left( y \right) \colon  \ \bigwedge_{i=1}^k   h_i\left( y \right)\neq 0  \wedge\bigwedge_{i=1}^k f_i\left( \lambda_0^N\left( \frac{g_1\left( y \right)}{h_1\left( y \right)} \right), \dotsc , \lambda_0^N\left( \frac{g_m\left( y \right)}{h_m\left( y \right)} \right) \right) = 0. $$
Then $L\models \psi\left( c \right)$, therefore by $1$-existential closedness of $K$ in $L^\partial_{\lambda_0}$ there is some $b\in K$ such that $L\models \psi\left( b \right)$. Then the tuple $ \left( \lambda_0^N\left( \frac{g_1\left( b \right)}{h_1\left( b \right)} \right), \dotsc , \lambda_0^N\left( \frac{g_m\left( b \right)}{h_m\left( b \right)} \right)\right)$ witnesses the satisfiability of $\phi$ in $K$, as desired.
\end{proof}

\begin{cor}
\label{ec}
For a strict $\fr^n$-differential field $\left( K, \partial \right)$, the following conditions are equivalent:
\begin{enumerate}
    \item $K$ is a $1$-existentially closed $\fr^n$-differential field in the language $L^\partial$.
    \item $K$ is a existentially closed $\fr^n$-differential field in the language $L^\partial$.
    \item $K$ is a $1$-existentially closed $\fr^n$-differential field in the language $L^\partial_{\lambda_0}$.
    \item $K$ is a existentially closed $\fr^n$-differential field in the language $L^\partial_{\lambda_0}$.
\end{enumerate}
\end{cor}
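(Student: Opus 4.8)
The plan is to deduce the corollary by assembling Proposition \ref{1ec}, Lemma \ref{redu} and the strictness hypothesis, running the cycle $(1)\Rightarrow(3)\Rightarrow(4)\Rightarrow(2)\Rightarrow(1)$. Two of the four arrows are immediate: $(3)\Rightarrow(4)$ is exactly Proposition \ref{1ec}, and $(2)\Rightarrow(1)$ holds because an existentially closed structure is a fortiori $1$-existentially closed (the language does not change). So the real content lies in the two implications that change the language between $L^\partial$ and $L^\partial_{\lambda_0}$: passing from $1$-existential closedness in $L^\partial$ to $1$-existential closedness in $L^\partial_{\lambda_0}$, and passing from existential closedness in $L^\partial_{\lambda_0}$ back to existential closedness in $L^\partial$.

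For $(1)\Rightarrow(3)$ I would argue as in the proof of Proposition \ref{1ec}. Let $\exists y\,\phi(y)$ be an existential $L^\partial_{\lambda_0}$-sentence over $K$, with $y$ a single variable and $\phi$ quantifier-free, holding in some $L^\partial_{\lambda_0}$-extension $L\supseteq K$, say $L\models\phi(a)$. Apply Lemma \ref{redu} to $\phi$ and $a$ to obtain a quantifier-free $L^\partial$-formula $\psi(y)$ over $K$, satisfiable in some $L^\partial_{\lambda_0}$-extension $L'$ of $K$, such that $\psi\to\phi$ holds in every $L^\partial_{\lambda_0}$-extension of $K$. Forgetting $\lambda_0$, the field $L'$ is an $L^\partial$-extension of $K$ in which $\exists y\,\psi(y)$ holds, so by $(1)$ there is $b\in K$ with $K\models\psi(b)$; since $K$, regarded as an $L^\partial_{\lambda_0}$-extension of itself, satisfies $\psi(b)\to\phi(b)$, we conclude $K\models\exists y\,\phi(y)$. (The same argument with a tuple of variables gives $(2)\Rightarrow(4)$ directly, should one prefer the ``paired'' organization $(1)\Leftrightarrow(3)$, $(2)\Leftrightarrow(4)$, $(3)\Leftrightarrow(4)$ instead of the cycle.)

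For $(4)\Rightarrow(2)$ the key observation is that, because $K$ is strict, every $L^\partial$-extension of $K$ is automatically an $L^\partial_{\lambda_0}$-extension. Indeed, if $L\supseteq K$ is an $\fr^n$-differential extension and $b\in K$ has a $p$-th root $c\in L$, then the twisted Leibniz rule gives $\partial(b)=\partial(c^p)=0$, so $b\in K^\partial=K^p$; hence $K\cap L^p=K^p$ and $\lambda_0^L|_K=\lambda_0^K$. (Alternatively one may invoke that strict $\fr^n$-differential fields are differentially perfect — which follows from Lemma \ref{lindisj}, answering Question 2 of \cite{DerFro} — so that $L/K$ is a separable field extension, whence $L\cap K^{1/p}=K$.) Consequently, given an existential $L^\partial$-sentence $\phi$ over $K$ holding in some $L^\partial$-extension $L\supseteq K$, we may view $L$ as an $L^\partial_{\lambda_0}$-extension of $K$ and $\phi$ as an existential $L^\partial_{\lambda_0}$-sentence; then $(4)$ yields a solution of $\phi$ in $K$.

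I do not expect a genuinely hard step: the substance of the corollary is already carried by Lemma \ref{lindisj}, Lemma \ref{redu} and Proposition \ref{1ec}. The two points that need care are (a) the bidirectional language change — the ``adding $\lambda_0$'' direction relies on Lemma \ref{redu}, while the ``forgetting $\lambda_0$'' direction relies on the fact that over a strict base the notions of $L^\partial$- and $L^\partial_{\lambda_0}$-extension coincide — and (b) keeping the arity bookkeeping consistent around the cycle: the steps $(1)\Rightarrow(3)$ and $(2)\Rightarrow(1)$ preserve the number of existentially quantified variables, and the only place where one jumps from a single variable to arbitrarily many is $(3)\Rightarrow(4)$, i.e. inside Proposition \ref{1ec}.
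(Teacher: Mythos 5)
Your proof is correct and follows essentially the same route as the paper's: the content in both cases is Proposition \ref{1ec} for passing from one variable to tuples, Lemma \ref{redu} for the ``adding $\lambda_0$'' direction, and the consequence of strictness (via Lemma \ref{lindisj}) that $L^\partial$- and $L^\partial_{\lambda_0}$-extensions of $K$ coincide for the other direction. The only difference is organizational (you run a single cycle $(1)\Rightarrow(3)\Rightarrow(4)\Rightarrow(2)\Rightarrow(1)$, while the paper proves the pairs $(1)\Leftrightarrow(3)$ and $(2)\Leftrightarrow(4)$ together with $(3)\Rightarrow(4)$), plus you spell out the strictness argument that the paper leaves implicit.
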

\begin{proof}
The implications $(2)\Longrightarrow (1)$ and $(4)\Longrightarrow (3)$ are obvious. Corollary \ref{1ec} gives the implication $(3)\Longrightarrow (4)$. It is now enough to prove $(1)\Longleftrightarrow (3)$ and $(2)\Longleftrightarrow (4)$.

By Lemma 2.1, $K$ has the same extensions in the languages $L^\partial$ and $L^\partial_{\lambda_0}$, hence $(3)\Longrightarrow (1)$ and $(4)\Longrightarrow (2)$. Moreover, by this and by Lemma \ref{redu}, if $K$ is (1-)existentially closed in $L^\partial$, it is also (1-)existentially closed in $L^\partial_{\lambda_0}$, therefore $(1)\Longrightarrow (3)$ and $(2)\Longrightarrow (4)$, which finishes the proof.
\end{proof}

\begin{remark}
\label{strategy}
Corollary \ref{ec} is the key ingredient of the proof of our main result, Theorem \ref{main} (i. e. that $T_{\operatorname{Wood}}$ axiomatizes $\fr^n-\dcf$). Using this Corollary, we are basically just left to prove, that models of $T_{\operatorname{Wood}}$ are $1$-existentially closed $\fr^n$-differential fields, a statement which seems natural.

It seems that idea of the proof of Theorem \ref{main} can be applied in a more general context. Whenever we have a theory $T'$ of fields with some operators satisfying some primitive element theorem (e. g. as in \cite{pogudin}), then $1$-existential closedness is equivalent to existential closeness. On the other hand, if a theory of fields $T'$ has a model companion which eliminates quantifiers (in some reasonable language $L$), then it is easy to see that the class of $1$-existentially closed models of $T'$ in the language $L$ is elementary. A more explicit axiomatization of $1$-existentially closedness models, so ``Wood axioms for $T'$'', should come from a division algorithm for the analog of the ring of differential polynomials for $T'$. Thus, we find it appropriate to ask the following.
\end{remark}

\begin{question}
Can the above strategy be carried out in some interesting case, for example in the case of $B$-operators?
\end{question}

\begin{remark}
It is worth noting, that the idea in Remark \ref{strategy} does not apply $\operatorname{ACFA}$. Recall that $\operatorname{ACFA}$ has quantifier eliminations only up to a single existential quantifier. Moreover, the fixed field of a model of $\operatorname{ACFA}$ is pseudofinite field, a condition which is not axiomatizable by formulas in one variable, hence it seems unlikely that $\operatorname{ACFA}$ can be axiomatized so.
\end{remark}

In what follows, we will use the following notation: if $K$ is a $\fr^n$-differential field, $a\in K$ and $m>0$, then $\partial^{<m}\left( a \right)$ denotes the tuple $\left( a, \partial\left( a \right), \dotsc, \partial^{m-1}\left( a \right) \right)$ and analogously for $\partial^{\le m} \left( a \right)$.

\begin{lemma}
\label{sep}
Let $K$ be a strict $\fr^n$-differential field, $K\subseteq L$ an $\fr^n$-differential field extension and $a\in L$. For any $m>0$, if $\partial^m \left( a \right)$ is algebraic over $K\left( \partial^{<m} \left( a \right) \right)$, then $\partial^m \left( a \right)$ is separably algebraic over $K\left( \partial^{<m} \left( a \right) \right)$.
\end{lemma}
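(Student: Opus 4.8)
The plan is to reduce to a configuration where $a,\partial(a),\dots,\partial^{m-1}(a)$ may be taken algebraically independent over $K$, and then to run a Seidenberg-type computation: differentiate the minimal polynomial of a suitable $p$-power of $\partial^m(a)$ and exploit strictness of $K$. \emph{Reduction.} Put $F_j:=K(\partial^{<j}(a))$ and let $j\le m$ be least with $\partial^j(a)$ algebraic over $F_j$; then $a,\partial(a),\dots,\partial^{j-1}(a)$ are algebraically independent over $K$, each $\partial^i(a)$ ($i<j$) being transcendental over $F_i$. I first record an auxiliary fact: if $(L,\partial)$ is an $\fr^n$-differential field, $E\subseteq L$ a subfield and $y\in L$ is \emph{separably} algebraic over $E$ with minimal polynomial $P=\sum a_iX^i$, then differentiating $\sum a_iy^i=0$ yields $\partial(y)\cdot\sum i a_i^{\,q}(y^q)^{i-1}=-\sum\partial(a_i)(y^q)^i$; since $P(X)^q=P^{(q)}(X^q)$ with $P^{(q)}:=\sum a_i^{\,q}X^i$, the polynomial $P^{(q)}$ has $\deg P$ distinct roots $w^q$ and is thus separable, so $y^q$ is a simple root of it, the displayed denominator is nonzero, and $\partial(y)\in E(y^q)\subseteq E(y)$. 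Consequently, if $\partial^j(a)$ were separably algebraic over $F_j$, then $F_{j+1}=F_j(\partial^j(a))$ would be closed under $\partial$, forcing $\partial^m(a)\in F_{j+1}\subseteq F_m$ and hence separable over $F_m$. So it remains to show $\partial^j(a)$ cannot be \emph{inseparably} algebraic over $F_j$ --- which for $j=m$ is precisely the lemma. Write $r:=j$, $\alpha_i:=\partial^i(a)$.

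\emph{The computation.} Suppose towards a contradiction that $\alpha_r$ is inseparable over $F_r=K(\alpha_0,\dots,\alpha_{r-1})$, with $\alpha_0,\dots,\alpha_{r-1}$ algebraically independent over $K$. Let $s\ge1$ be the inseparable exponent: the minimal polynomial of $\alpha_r$ over $F_r$ is $\tilde P(X^{p^s})$ with $\tilde P=\sum_{k=0}^d c_kX^k\in F_r[X]$ separable irreducible and $c_d=1$; set $\beta:=\alpha_r^{p^s}$, so $\tilde P$ is the minimal polynomial of $\beta$ over $F_r$ and $\beta$ is separable over $F_r$. Since $s\ge1$, $\beta$ is a $p$-th power, so $\partial(\beta)=0$ and $\partial(\beta^k)=0$ for all $k$ (using $\partial(x^N)=Nx^{(N-1)q}\partial(x)$); differentiating $\sum_kc_k\beta^k=0$ therefore gives $\sum_k(\beta^q)^k\partial(c_k)=0$. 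Writing $c_k=C_k(\alpha_0,\dots,\alpha_{r-1})$ for a rational function $C_k$ over $K$, the chain rule for $\fr^n$-derivations gives $\partial(c_k)=C_k^{(\partial)}(\vec\alpha^{\,q})+\sum_{i=0}^{r-1}(\partial_{X_i}C_k)^{(q)}(\vec\alpha^{\,q})\,\alpha_{i+1}$, where $(\cdot)^{(q)}$ raises coefficients to the $q$-th power and $C_k^{(\partial)}$ applies $\partial$ to them; thus $\partial(c_k)=D_k+E_k\alpha_r$ with $D_k,E_k\in F_r$, the term $E_k\alpha_r$ coming from $\alpha_{i+1}=\alpha_r$ at $i=r-1$ (when $r=0$ just $\partial(c_k)\in K$). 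Hence $\tilde D(\beta^q)+\alpha_r\tilde E(\beta^q)=0$, where $\tilde D,\tilde E\in F_r[Y]$ have coefficients $D_k,E_k$; from $c_d=1$ and $\alpha_r\notin F_r$ we get $D_d=E_d=0$, so $\deg\tilde D,\deg\tilde E\le d-1$.

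\emph{Strictness kicks in.} As $\beta$ is separable over $F_r$, it is both separable and purely inseparable over $F_r(\beta^q)$, so $\beta\in F_r(\beta^q)$ and $F_r(\beta^q)=F_r(\beta)$ has degree $d$ over $F_r$, whence the minimal polynomial of $\beta^q$ over $F_r$ has degree $d$; moreover $\alpha_r\notin F_r(\beta)$ because $[F_r(\alpha_r):F_r(\beta)]=p^s>1$. From $\tilde D(\beta^q)+\alpha_r\tilde E(\beta^q)=0$ with $\alpha_r\notin F_r(\beta^q)$ we get $\tilde E(\beta^q)=0=\tilde D(\beta^q)$, and as $\deg\tilde D,\deg\tilde E<d$ this forces $\tilde D=\tilde E=0$, i.e.\ $\partial(c_k)=0$ and $E_k=0$ for all $k$. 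When $r\ge1$, $E_k=0$ with the $\alpha_i$ algebraically independent forces $C_k$ to involve $X_{r-1}$ only through $X_{r-1}^p$; plugging this back into $\partial(c_k)=0$ and comparing degrees in $\alpha_{r-1}$ modulo $pq$ kills the dependence on $X_{r-2}$, and iterating gives $C_k=\check C_k(X_0^p,\dots,X_{r-1}^p)$; the leftover identity reads $\check C_k^{(\partial)}(\alpha_0^{pq},\dots,\alpha_{r-1}^{pq})=0$, so $\check C_k^{(\partial)}=0$, and strictness $K^\partial=K^p$ makes every coefficient of $\check C_k$ a $p$-th power in $K$. Thus $c_k\in K^p(\alpha_0^p,\dots,\alpha_{r-1}^p)=\big(K(\alpha_0,\dots,\alpha_{r-1})\big)^p=F_r^p$ (the middle equality again by algebraic independence); for $r=0$ this is immediate from $\partial(c_k)=0$ and $K^\partial=K^p$.

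\emph{Contradiction.} Write $c_k=e_k^p$ with $e_k\in F_r$ and put $\mu:=\alpha_r^{p^{s-1}}$, so $\mu^p=\beta$ and $0=\sum_kc_k\beta^k=\big(\sum_ke_k\mu^k\big)^p$; hence $\mu$ is a root of $Q:=\sum_ke_kX^k\in F_r[X]$. Raising coefficients to the $p$-th power is an injective ring endomorphism of $F_r[X]$ sending $Q$ to $\tilde P$, so irreducibility of $\tilde P$ forces $Q$ irreducible and separability of $\tilde P$ forces $Q$ separable; therefore $\mu=\alpha_r^{p^{s-1}}$ is separably algebraic over $F_r$, contradicting the minimality of $s$. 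The step I expect to be most delicate is the one in the previous paragraph --- stripping off, one $X_i$ at a time via the degree-mod-$pq$ comparison, the dependence of the coefficients $C_k$ on the variables, and in particular getting the chain rule for $\fr^n$-derivations exactly right --- while the reduction in the second paragraph also needs a little care.
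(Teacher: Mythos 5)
Your argument is sound in substance, and its core computation takes a genuinely different route from the paper's. Both proofs make the same reduction to the case where an initial segment $a,\partial(a),\dots$ is algebraically independent over $K$, via the observation that separable algebraicity at an earlier stage makes $K\left( \partial^{\le j}\left( a \right)\right)$ closed under $\partial$. From there the paper works with a primitive minimal relation over the polynomial ring $K\left[ \partial^{<m}\left( a \right)\right]$, differentiates it, divides by $f$ using Gauss's lemma, iterates with $\partial^2,\partial^3,\dots$ (each $\partial^k$ being an $\fr^{kn}$-derivation) to kill all partials of the coefficients, and finally invokes separability of $L/K$ (strictness plus Lemma \ref{lindisj}) to lower the degree and contradict minimality. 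You instead isolate the inseparability exponent $s$, pass to $\beta=\partial^m\left( a \right)^{p^s}$, which is separable over $F_r$ and annihilated by $\partial$, differentiate only once, and replace Gauss's lemma by field-degree bookkeeping ($F_r(\beta^q)=F_r(\beta)$ of degree $d$, and $\alpha_r\notin F_r(\beta)$); strictness is applied directly to the coefficients, and the contradiction is with minimality of $s$ rather than of the degree. This avoids both the iterated use of higher powers of $\partial$ and any appeal to separability of $L/K$, which is a real simplification; what the paper's choice of polynomial (rather than field) coefficients buys is that coefficient-wise operations are unambiguous, and the same primitive-minimal-polynomial setup is reused verbatim in the proof of Theorem \ref{main}. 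The elimination step you flag as delicate is in fact fine: at each stage the identity has the form $W_0+X_{i+1}W_1=0$ with $W_0,W_1$ lying in a subfield over which $X_{i+1}$ has degree $pq$, so $W_1=0$ and the next partial derivative vanishes.

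Two places do need tightening. First, your auxiliary fact asserts $\partial(y)\in E(y^q)$ for an arbitrary subfield $E$, which presupposes $\partial(a_i)\in E(y^q)$; in your application $E=F_j$ and one only has $\partial(a_i)\in F_j(y)$, so the correct (and sufficient) conclusion is $\partial(y)\in E(y)$, i.e.\ that $F_{j+1}$ is $\partial$-stable. Second, the step ``$\check C_k^{(\partial)}=0$, so every coefficient of $\check C_k$ is a $p$-th power'' is not literally meaningful for a rational function, whose coefficients depend on the chosen fraction: the fraction $t/t$ satisfies $\check C^{(\partial)}=0$ without $t\in K^\partial$. What you actually need is that the kernel of the $\fr^n$-derivation on $K(X_0,\dots,X_{r-1})$ extending $\partial$ and killing the variables is exactly $K^\partial(X_0,\dots,X_{r-1})$; this is true and easy to prove (write $\check C_k=A/B$ with $A,B$ coprime; the vanishing forces $A^\partial=\lambda\cdot A$-with-coefficients-raised-to-the-$q$ and likewise for $B$ with the same $\lambda\in K$, and dividing numerator and denominator by a fixed nonzero coefficient of $B$ produces a representation with coefficients in $K^\partial$), or it can be sidestepped entirely by clearing denominators and working with a primitive relation over $K\left[ \partial^{<m}\left( a \right)\right]$ as the paper does. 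With these routine repairs your proof goes through.
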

\begin{proof}
Note that if $\partial^m \left( a \right)$ is separably algebraic over $K\left( \partial^{ m} \left( a \right) \right)$, then for any $k>m$ we have $\partial^{k}\left( a \right) \in K\left( \partial^{\le m} \left( a \right) \right)$. Indeed, if $f$ is the (separable) minimal polynomial of $\partial^m \left( a \right)$ over $K\left( \partial^{<m} \left( a \right) \right)$, then
$$0=\partial \left( f \left( \partial^m \left( a \right) \right) \right) =  f^\partial \left( \partial^m \left( a \right)^q \right) + f'\left( \partial^m \left( a \right) \right)^q \partial^{m+1} \left( a \right)$$
and since $f'\left( \partial^m \left( a \right) \right)\neq 0$, the claim follows. Here $f^\partial$ denotes the polynomial obtained by applying $\partial$ to the coefficients of $f$.

We may therefore assume that the tuple $\bar{a}=\partial^{<m} \left( a \right)$ is algebraically independent over $K$. Let $f\left( X \right) = \sum_{i=0}^N f_i\left( \bar{a} \right) X^i$ be the minimal polynomial of $b=\partial^{m} \left( a \right)$ over $K\left[ \bar{a}\right]$ - by this we mean that $f\in K\left[ \bar{a}\right] \left[ X \right]$ is a minimal-degree ($=N$) polynomial vanishing on $b$ and the coefficients $f_i$ have minimal total degree. Since the tuple $\bar{a}$ is algebraically independent over $K$, the ring $K\left[ \bar{a}\right] \left[ X\right]$ is UFD, the polynomials $f_0,\dotsc , f_N$ are uniquely determined and saying that they have minimal degree is the same as saying that they are coprime.

Assume that the claim of the Lemma does not hold, i. e. that $f'=0$. Using this and the chain rule we can do the following calculations:
\begin{align*}
0 &= \partial \left( f \left( b \right) \right) = \partial \left( \sum_{i=0}^N f_i\left( \bar{a}\right) b^i \right) = \sum_{i=0}^N \partial \left( f_i\left( \bar{a} \right) \right) b^{qi}  \\
&= \sum_{i=0}^N \left( f_i^\partial\left( \bar{a}^q  \right) + \sum_{j=0}^{m-1} \frac{\partial f_i}{\partial X_j} \left( \bar{a}\right)^q \partial^{j+1} \left( a \right)\right) b^{qi} \\
&= \sum_{i=0}^N \left( f_i^\partial\left( \bar{a}^q  \right) + \sum_{j=0}^{m-2} \frac{\partial f_i}{\partial X_j} \left( \bar{a}\right)^q \partial^{j+1} \left( a \right)\right) b^{qi} + \sum_{i=0}^N  \frac{\partial f_i}{\partial X_{m-1}} \left( \bar{a}\right)^q \partial^{m} \left( a \right) b^{qi} \\
&=\sum_{i=0}^N \left( f_i^\partial\left( \bar{a}^q  \right) + \sum_{j=0}^{m-2} \frac{\partial f_i}{\partial X_j} \left( \bar{a}\right)^q \partial^{j+1} \left( a \right)\right) b^{qi} + \sum_{i=0}^N  \frac{\partial f_i}{\partial X_{m-1}} \left( \bar{a}\right)^q  b^{qi+1}.
\end{align*}
Let us define $g\in K\left[ \bar{a}\right] \left[ X \right]$ by the following formula:
$$g\left( X \right) = \sum_{i=0}^N \left( f_i^\partial\left( \bar{a}^q  \right) + \sum_{j=0}^{m-2} \frac{\partial f_i}{\partial X_j} \left( \bar{a}\right)^q \partial^{j+1} \left( a \right)\right) X^{qi} + \sum_{i=0}^N  \frac{\partial f_i}{\partial X_{m-1}} \left( \bar{a}\right)^q  X^{qi+1}.$$
Since the polynomial $g$ vanishes at $b$ we have that, using Gauss Lemma, $g$ is divisible by $f$ in the ring $K\left[ \bar{a}\right] \left[ X \right]$ (here we used the fact that the coefficients of $f$ are coprime). Therefore, since $f'=0$, also $g'$ is divisible by $f$, hence $g'\left( b \right) = 0$. A direct calculation shows that
$$g'\left( X \right) = \sum_{i=0}^N \frac{\partial f_i}{\partial X_{m-1}} \left( \bar{a}\right)^q  X^{qi} = \left( \sum_{i=0}^N \frac{\partial f_i}{\partial X_{m-1}} \left( \bar{a}\right) X^{i}\right)^q,$$
thus $\sum_{i=0}^N\frac{\partial f_i}{\partial X_{m-1}} \left( \bar{a}\right)  b^{i} = 0$. By the minimality assumption on $f$, for any $i$ we have $\frac{\partial f_i}{\partial X_{m-1}} \left( \bar{a}\right) = 0 $ and, since $\bar{a}$ is algebraically independent over $K$, also $\frac{\partial f_i}{\partial X_{m-1}} = 0$.

By repeating analogous calculations we may also show that $\frac{\partial f_i}{\partial X_{j}} = 0$ for any $i, j$ - as an example, we will prove this for $j=m-2$. Recall that $\partial^2=\partial\circ \partial$ is a derivation of $x\mapsto x^{q^2}$. Using the fact $\frac{\partial f_i}{\partial X_{m-1}} = 0$ and the same identities as previously, we arrive at
$$0=\partial^2\left( f \left( b\right) \right) = \sum_{i=0}^N \left( f_i^{\partial^2}\left( \bar{a}^{q^2}  \right) + \sum_{j=0}^{m-3} \frac{\partial f_i}{\partial X_j} \left( \bar{a}\right)^{q^2} \partial^{j+2} \left( a \right)\right) b^{q^2i} + \sum_{i=0}^N  \frac{\partial f_i}{\partial X_{m-2}} \left( \bar{a}\right)^{q^2}  b^{q^2i+1}.$$
Let $h\in K\left[ \bar{a}\right] \left[ X \right]$ play the role of $g$ above. As previously, $h'\left( b \right)=0$ and direct calculations show that
$$h'\left( X \right) = \sum_{i=0}^N  \frac{\partial f_i}{\partial X_{m-2}} \left( \bar{a}\right)^{q^2}  X^{q^2i} = \left(\sum_{i=0}^N  \frac{\partial f_i}{\partial X_{m-2}} \left( \bar{a}\right)  X^{i} \right)^{q^2},$$
hence $\sum_{i=0}^N  \frac{\partial f_i}{\partial X_{m-2}} \left( \bar{a}\right)  b^{i} = 0$, thus by the minimality assumptions on $f$ and the independence of $\bar{a}$ we get that $\frac{\partial f_i}{\partial X_{m-2}}=0$ for any $i$.

Since $\frac{\partial f_i}{\partial X_{j}} = 0$ for any $i, j$, we get that every $f_i$ is a polynomial in $X_0^p,\dotsc , X_{m-1}^p$. Consider the equality
$$\sum_{i=0}^N f_i\left( \bar{a} \right) b^i = 0.$$
Since $f_i=0$ if $i$ is not divisible by $p$ and every $f_i$ is a polynomial in $X_0^p,\dotsc , X_{m-1}^p$, this equality expresses the linear dependence  over $K$ of $p$th powers of some elements of $L$, namely some monomials in $\bar{a}$ and $b$ whose degree in $b$ is not greater than $N/p$. Since $K\subseteq L$ is an separable extension, already this monomials must be linearly dependent over $K$. Because of the algebraic independence of $\bar{a}$ over $K$, this dependence relation must contain $b$. Thus we get that $b$ satisfies some algebraic relation over $K\left[ \bar{a}\right]$ of degree smaller than $N$, contrary to the minimality of $N$.
\end{proof}

\begin{lemma}
\label{simpler}
A strict $\fr^n$-differential field $K$ is 1-existentially closed in the language $L^\partial$ if and only if the following condition holds: if $f, g \in K\{ X\}$ and there is some $\fr^n$-differential extension $K\subseteq L$ such that $L \models \left(\exists x\right) \left( f\left( x \right) = 0, g\left( x \right)\neq 0\right)$, then $K \models \left(\exists x\right) \left( f\left( x \right) = 0, g\left( x \right)\neq 0\right)$.
\end{lemma}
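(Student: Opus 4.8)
The left-to-right implication is immediate: $1$-existential closedness in $L^\partial$ in particular settles every sentence of the form $(\exists x)\bigl(f(x)=0\wedge g(x)\neq 0\bigr)$ with $f,g\in K\{X\}$ that holds in an $\fr^n$-differential extension of $K$. For the converse, assume the displayed condition; the plan is to verify that $K$ is $1$-existentially closed in $L^\partial$. So let $\psi(x)$ be a quantifier-free $L^\partial$-formula in one variable with parameters in $K$ which is satisfied by some $a$ in an $\fr^n$-differential extension $K\subseteq L$. First I would put $\psi$ into disjunctive normal form, keep a disjunct satisfied by $a$, and collapse its finitely many inequations into one via the product, reducing to the case $\psi(x)\equiv\bigwedge_{i=1}^k f_i(x)=0\wedge g(x)\neq 0$ with $f_1,\dotsc,f_k,g\in K\{X\}$. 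If all $f_i$ vanish identically this is exactly the hypothesis (take $f$ to be the zero differential polynomial), so assume $f_1\neq 0$, whence $a$ is differentially algebraic over $K$.

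The crux is to replace the system $\bigwedge_i f_i(x)=0$ by a single differential equation, together with one inequation, whose solutions in any $\fr^n$-differential extension of $K$ automatically satisfy the system. Let $m$ be least such that $\partial^m(a)$ is algebraic over $K(\partial^{<m}(a))$, so $\partial^{<m}(a)$ is algebraically independent over $K$; by Lemma \ref{sep} when $m\ge 1$, and --- when $m=0$ --- by the fact that a strict $\fr^n$-differential field is differentially perfect (a consequence of Lemma \ref{lindisj}), the element $\partial^m(a)$ is \emph{separably} algebraic over $K(\partial^{<m}(a))$. Fix an irreducible $P\in K[X_0,\dotsc,X_m]$ genuinely involving $X_m$ with $P(\partial^{\le m}(a))=0$ and $\frac{\partial P}{\partial X_m}(\partial^{\le m}(a))\neq 0$, and set $P^\ast:=P(X,X',\dotsc,X^{(m)})\in K\{X\}$. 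Differentiating $P^\ast=0$ repeatedly with the $\fr^n$-Leibniz rule, the coefficient of $X^{(m+j)}$ in $\partial^j(P^\ast)$ works out to be the $q^j$-th power of $\frac{\partial P}{\partial X_m}(X,\dotsc,X^{(m)})$, a differential polynomial of order at most $m$; hence one obtains rational functions $R_{m+j}\in K(X_0,\dotsc,X_m)$, each with denominator a power of $\frac{\partial P}{\partial X_m}$, such that $\partial^{m+j}(c)=R_{m+j}(\partial^{\le m}(c))$ for \emph{every} element $c$ of \emph{every} $\fr^n$-differential extension of $K$ with $P^\ast(c)=0$ and $\frac{\partial P}{\partial X_m}(\partial^{\le m}(c))\neq 0$.

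The key claim is then that such a $c$ satisfies $Q(c)=0$ for every differential polynomial $Q$ over $K$ with $Q(a)=0$. I would prove this by substituting $X^{(m+j)}\mapsto R_{m+j}$ into $Q$ to get $\widetilde Q=N/D$ with $D$ a power of $\frac{\partial P}{\partial X_m}$; since $\partial^{<m}(a)$ is algebraically independent and $\partial^m(a)$ is a root of the irreducible $P$, the kernel of $K[X_0,\dotsc,X_m]\to K(\partial^{\le m}(a))$ equals $(P)$, so $N(\partial^{\le m}(a))=0$ gives $P\mid N$, and evaluating at $\partial^{\le m}(c)$ yields $N(\partial^{\le m}(c))=0$ with $D(\partial^{\le m}(c))\neq 0$, hence $Q(c)=\widetilde Q(\partial^{\le m}(c))=0$. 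To conclude, put $f:=P^\ast$ and $h:=\frac{\partial P}{\partial X_m}(X,\dotsc,X^{(m)})\cdot g\in K\{X\}$: since $a$ witnesses $(\exists x)(f(x)=0\wedge h(x)\neq 0)$ in $L$, the hypothesis produces $b\in K$ with $P^\ast(b)=0$, $\frac{\partial P}{\partial X_m}(\partial^{\le m}(b))\neq 0$ and $g(b)\neq 0$; the claim applied to $c=b$ gives $f_i(b)=0$ for all $i$, so $b$ is the desired solution of $\psi$ in $K$.

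The step I expect to be the main obstacle is the uniform ``reduction into $(P)$'' behind the key claim: checking that iterated differentiation of $P^\ast$ genuinely expresses all higher derivatives as rational functions of $\partial^{\le m}$ with denominators that are powers of $\frac{\partial P}{\partial X_m}$, and that reducing an arbitrary differential polynomial vanishing at $a$ by these substitutions lands in the principal ideal $(P)$ --- this is the place where the absence of a true division algorithm for $\fr^n$-differential polynomials (noted in the Introduction) has to be worked around by hand. The separability furnished by Lemma \ref{sep} is precisely what keeps $\frac{\partial P}{\partial X_m}$, and hence every denominator appearing, away from zero along the locus; strictness of $K$ enters only through that lemma.
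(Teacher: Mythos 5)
Your proof is correct, but it takes a genuinely different route from the paper's. The paper's argument is a short $p$-independence trick: after reducing to a system $f_1(x)=0\wedge\dotsb\wedge f_m(x)=0\wedge g(x)\neq 0$, it picks $t\in K\setminus K^p$ and $N$ with $p^N>m$, sets $f=f_1^{p^N}+tf_2^{p^N}+\dotsb+t^{m-1}f_m^{p^N}$, and uses Lemma \ref{lindisj} (strictness) only to guarantee that $t\notin L^p$ in every $L^\partial$-extension $L$, so that $f(x)=0$ is equivalent to the whole conjunction in every extension; the hypothesis for the single pair $(f,g)$ then finishes the proof. You instead replace the system by a single equation of the ``right'' order: you take the irreducible polynomial $P$ of $\partial^{\le m}(a)$, use Lemma \ref{sep} (and differential perfectness for $m=0$) to get separability in $X_m$, express all higher derivatives as rational functions with denominators powers of $\frac{\partial P}{\partial X_m}$, and prove that every differential polynomial vanishing at $a$ vanishes at any solution of $P^\ast(x)=0\wedge\frac{\partial P}{\partial X_m}(\partial^{\le m}(x))\neq 0$ --- this is essentially the machinery (formula $(\ast)$ and Claim 1) that the paper deploys only later, inside the proof of Theorem \ref{main}. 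Your key claim is sound: the kernel of $K[X_0,\dotsc,X_m]\to K[\partial^{\le m}(a)]$ is a height-one prime in a UFD, hence principal and generated by any irreducible element of it, which justifies the reduction $P\mid N$, and the nonvanishing of $\frac{\partial P}{\partial X_m}$ at $\partial^{\le m}(a)$ follows from separability by a degree argument (both points are left rather terse and worth spelling out). The trade-off: the paper's proof is a two-line formal argument, while yours proves slightly more (the single equation can be taken separable in its top variable, anticipating the shape needed for the Wood axioms) at the cost of duplicating work that Theorem \ref{main} redoes anyway.
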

\begin{proof}
The implication $\left( \Longrightarrow \right)$ is immediate. For $\left( \Longleftarrow \right)$ assume that $K$ satisfies the condition from the statement of the Lemma. Let $\phi \left( x \right)$ be a quantifier free $L^\partial$-formula (in one variable, with parameters from $K$) for which there is some extension $K\subseteq L$ such that $L\models \left(\exists x\right) \phi\left( x \right)$. We may assume that this formula is of the form
$$f_1\left( x \right) =  0 \wedge \dotsc \wedge  f_m\left( x \right) = 0 \wedge g\left( x \right) \neq 0$$
for some differential polynomials $f_1,\dotsc, f_m, g \in K\{ X\}$. Take $N$ such that $m<p^N$ and pick some $t\in K \setminus K^p$. By Lemma \ref{lindisj}, for any extension $K\subseteq L$ in the language $L^\partial$ we have $t\in L\setminus L^p$, thus we have
$$L\models \left(\exists x\right)\left( f_1\left( x \right) =  0 \wedge \dotsc \wedge  f_m\left( x \right) = 0 \wedge g\left( x \right) \neq 0\right) \Longleftrightarrow L \models \left(\exists x\right) \left( f\left( x \right) = 0 \wedge g\left( x \right)\neq 0\right),$$
where $f\left( x \right) = f_1\left( x \right)^{p^N}+t f_2\left( x \right)^{p^N}+\dotsc +t^{m-1}f_m\left( x \right)^{p^N}$.  Since $L\models \left(\exists x\right) \phi\left( x \right)$, we have that $L \models \left(\exists x\right) \left( f\left( x \right) = 0\wedge g\left( x \right)\neq 0\right)$, thus $K \models \left(\exists x\right) \left( f\left( x \right) = 0\wedge g\left( x \right)\neq 0\right)$, hence also $K\models \left(\exists x\right) \phi\left( x \right)$. Therefore, $K$ is 1-existentially closed.
\end{proof}

We are now ready to prove our main theorem. Recall that the theory $T_{\operatorname{Wood}}$ was defined in the Introduction.

\begin{theorem}
\label{main}
The theory $T_{\operatorname{Wood}}$ has the same models as $\fr^n - \dcf_p$.
\end{theorem}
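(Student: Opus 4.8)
The plan is to derive Theorem~\ref{main} from Corollary~\ref{ec}, exactly as outlined in Remark~\ref{strategy}. Recall that the models of $\fr^n-\dcf_p$ are by definition the existentially closed $\fr^n$-differential fields, and these are strict (an element of $K^\partial$ becomes a $p$-th power in any strict extension, which exists, so existential closedness pulls it back into $K$), hence differentially perfect by Lemma~\ref{lindisj}. Therefore Corollary~\ref{ec} reduces Theorem~\ref{main} to the assertion that a $\fr^n$-differential field $K$ is a model of $T_{\operatorname{Wood}}$ if and only if it is strict and $1$-existentially closed in $L^\partial$.

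For the direction ``$\fr^n-\dcf_p\Rightarrow T_{\operatorname{Wood}}$'', let $K\models\fr^n-\dcf_p$; then $K$ is strict (so axiom~(1) holds) and $1$-existentially closed. To verify axiom~(2), given nonzero $f,g\in K\{X\}$ with $\ord f=m$, $\ord g<m$ and $\partial f/\partial X^{(m)}\neq 0$, I would produce an $\fr^n$-differential extension $L\supseteq K$ containing a witness and then invoke $1$-existential closedness. Such an $L$ is obtained by taking $y_0,\dots,y_{m-1}$ algebraically independent over $K$, letting $y_m$ be a root of a separable irreducible factor of $f(y_0,\dots,y_{m-1},X^{(m)})$ over $K(y_0,\dots,y_{m-1})$ --- one exists precisely because $\partial f/\partial X^{(m)}\neq 0$ forces this polynomial not to be a $p$-th power --- and extending $\partial$ to $L=K(y_0,\dots,y_m)$ by declaring $\partial(y_i)=y_{i+1}$ for $i<m$, which is legitimate since $y_0,\dots,y_{m-1}$ is a separating transcendence basis of $L/K$. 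Then $a:=y_0$ satisfies $\partial^i(a)=y_i$, so $f(a)=0$ while $g(a)=g(y_0,\dots,y_{m-1})\neq 0$ by algebraic independence.

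The substantive direction is ``$T_{\operatorname{Wood}}\Rightarrow\fr^n-\dcf_p$''. Let $K\models T_{\operatorname{Wood}}$; it is strict by axiom~(1), hence differentially perfect by Lemma~\ref{lindisj}, and axiom~(2) applied with $(f,g)=(p,1)$ for an irreducible separable $p$ of degree $\geq 1$ (the case $m=0$) shows $K$ is separably closed. By Corollary~\ref{ec} it suffices to show $K$ is $1$-existentially closed in $L^\partial$, and by Lemma~\ref{simpler} this reduces to: whenever $f,g\in K\{X\}$ and some $\fr^n$-differential extension $L\supseteq K$ contains $a$ with $f(a)=0$ and $g(a)\neq 0$, then such an $a$ exists in $K$. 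One may assume $f$ is a nonconstant nonzero differential polynomial (the degenerate cases are handled directly by axiom~(2)), so $a$ is differential-algebraic over $K$; if $a$ is algebraic over $K$ then $a\in K$, since $K$ is differentially perfect and separably closed, and we are done. So assume $a\notin K$ and let $m\geq 1$ be least with $\partial^m(a)$ algebraic over $K(\partial^{<m}(a))$. Then $\bar a:=\partial^{<m}(a)$ is algebraically independent over $K$, and by Lemma~\ref{sep} the element $b:=\partial^m(a)$ is \emph{separably} algebraic over $K(\bar a)$, with every $\partial^k(a)$, $k>m$, lying in $K(\bar a,b)$.

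I would then take $f^\ast\in K\{X\}$ of order $m$ whose reduct as a polynomial in $X^{(m)}$ over $K[X,\dots,X^{(m-1)}]$ is the primitive minimal polynomial of $b$ over $K[\bar a]$: thus $f^\ast(a)=0$ and, by separability, $\partial f^\ast/\partial X^{(m)}\neq 0$. Substituting each $\partial^k(a)$ ($k>m$) by its expression as a rational function of $\bar a,b$ (with denominators powers of the reduction of $\partial f^\ast/\partial X^{(m)}$ modulo $f^\ast$) and reducing modulo $f^\ast$, one rewrites $f(a)=\Phi_f(b)$ and $g(a)=\Phi_g(b)$ with $\Phi_f,\Phi_g\in K(\bar a)[Y]$ of degree $<\deg f^\ast$; since $f(a)=0$ we get $\Phi_f=0$ and since $g(a)\neq 0$ we get $\Phi_g\neq 0$. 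Using irreducibility of $f^\ast$, one packages ``$\Phi_g\neq 0$ together with non-vanishing of all the denominators used'' into the non-vanishing at $\bar a$ of a single nonzero $\tilde g\in K[X,\dots,X^{(m-1)}]$ --- a product of the leading coefficient of $f^\ast$ in $X^{(m)}$, its discriminant, the polynomials clearing the relevant denominators, and the resultants in $X^{(m)}$ of $f^\ast$ with the numerator and denominator of $\Phi_g$ --- so in particular $\ord\tilde g<m=\ord f^\ast$. Applying axiom~(2) to the pair $(f^\ast,\tilde g)$ then yields $a^\ast\in K$ with $f^\ast(a^\ast)=0$ and $\tilde g(a^\ast)\neq 0$, and the conditions encoded by $\tilde g(a^\ast)\neq 0$ guarantee that the generic identities above remain valid after the specialisation $\bar a\mapsto\partial^{<m}(a^\ast)$, $b\mapsto\partial^m(a^\ast)$, whence $f(a^\ast)=0$ and $g(a^\ast)\neq 0$, as required. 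The step I expect to be the main obstacle is precisely this construction of $\tilde g$: identifying which resultant, discriminant and denominator-clearing conditions must be bundled together so that $f^\ast(a^\ast)=0\wedge\tilde g(a^\ast)\neq 0$ genuinely forces $f(a^\ast)=0\wedge g(a^\ast)\neq 0$, while keeping every factor nonzero at $\bar a$ and of order strictly below $m$. This is the ``division-algorithm flavoured'' argument promised in the Introduction, and it is here that strictness of $K$ (via Lemma~\ref{sep}, which makes $b$ separably algebraic over $K(\bar a)$) and the order restriction $\ord g<\ord f$ built into Wood's axioms are indispensable.
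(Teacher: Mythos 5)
Your proposal is correct and follows the paper's overall skeleton --- the reduction via Corollary~\ref{ec} and Lemma~\ref{simpler} to a single equation--inequation system, the use of Lemma~\ref{sep} to get that $b=\partial^m(a)$ is separably algebraic over $K(\partial^{<m}(a))$, the minimal-polynomial differential polynomial $f^\ast$ (the paper's $\tilde f$) and the generic rational formulas for $\partial^k(a)$, $k>m$ --- but the endgame is organized genuinely differently. The paper runs a minimal-counterexample argument: it picks $(f,g)$ with $f$ of minimal order, then minimal degree in $X^{(m)}$, then $g$ of minimal order, uses its Claim~1 to force the \emph{original} $f$ to already have order $m$ and be separable in $X^{(m)}$ and $g$ to have order exactly $m$, proves coprimality of the associated $F,G$ from minimality (Claim~2), and then applies Euclidean division to get $\tilde g=pf+qg$ of order $<m$, invoking the Wood axiom for the pair $(f,\tilde g)$. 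You instead apply the Wood axiom to $(f^\ast,\tilde g)$, where $\tilde g$ bundles the leading coefficient and discriminant of $f^\ast$ in $X^{(m)}$, the denominator-clearing factors, and a resultant with the reduction of $g$ modulo $f^\ast$, and then transfer the solution back to $(f,g)$ through the generic identities; note that your resultant identity $\operatorname{Res}=Af^\ast+BR$ is exactly the Bezout move the paper performs via Claim~2 and Euclidean division, just carried out after reduction modulo $f^\ast$. The step you flagged as the main obstacle does go through: non-vanishing of $\operatorname{lc}\cdot\operatorname{disc}$ at $\partial^{<m}(a^\ast)$ forces $(\partial f^\ast/\partial X^{(m)})(a^\ast)\neq 0$ at any root of $f^\ast$, the identity ``(cleared numerator of $f$) $=Q\cdot f^\ast$'' (i.e.\ $\Phi_f=0$) transfers $f(a^\ast)=0$, and the resultant transfers $g(a^\ast)\neq 0$; all guard factors are nonzero polynomials of order $<m$ because $\partial^{<m}(a)$ is algebraically independent. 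What each route buys: yours avoids the counterexample-minimality bookkeeping and is more directly constructive, at the cost of explicitly tracking discriminants, denominators and resultants; the paper's minimality scaffolding lets it apply the Wood axiom to the original $f$ (so no transfer of $f(a^\ast)=0$ is needed) and keeps the polynomial manipulations lighter. Two small remarks: the inclusion of $T_{\operatorname{Wood}}$ in $\fr^n-\dcf_p$ is simply cited from \cite{DerFro} in the paper, so your explicit witness construction (which is fine, relying on extending $\fr^n$-derivations with prescribed values on a transcendence basis and uniquely to separable algebraic extensions) is not needed; and your preliminary observations (strictness of models of $\fr^n-\dcf_p$, separable closedness of models of $T_{\operatorname{Wood}}$, the degenerate cases $f=0$ or $a$ algebraic over $K$) are correct and implicitly or explicitly present in the paper's argument as well.
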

\begin{proof}
Since the theory $\fr^n - \dcf_p$ contains $T_{\operatorname{Wood}}$ (see \cite{DerFro}, after Question 3), every model of $\fr^n - \dcf_p$ is a model of $T_{\operatorname{Wood}}$.

For the other direction, note that by Corollary \ref{ec} it is enough to  prove that models of $T_{\operatorname{Wood}}$ are 1-existentially closed $\fr^n$-differential fields in the language $L^\partial$. In order to prove this, we will verify the condition from Lemma \ref{simpler}. Assume that this condition is not verified, i. e. there are $f, g \in K\{ X\}$ and some $\fr^n$-differential extension $K\subseteq L$ such that $L \models \left(\exists x\right) \left( f\left( x \right) = 0, g\left( x \right)\neq 0\right)$, but $K \not\models \left(\exists x\right) \left( f\left( x \right) = 0, g\left( x \right)\neq 0\right)$. Among all such $f, g$ pick a pair which is minimal in the following sense: $f$ has minimal order, minimal degree in the highest variable and $g$ has (for this $f$) minimal order. Note that $f\neq 0$, since $T_{\operatorname{Wood}}$ proves that for any non-zero $g\in K\{ X\}$ there is some $a\in K$ such that $g\left( a \right) \neq 0$.

Let $L$ be an $\fr^n$-extension of $K$ and let $a\in L$ be such that $f\left( a \right) = 0, g\left( a \right)\neq 0$. Let $m$ be the maximal numbers such that the tuple $\partial^{<m}\left( a \right)$ is algebraically independent over $K$. Let $\tilde{F}\left( X \right)$ be the minimal polynomial of $\partial^m \left( a \right)$ over $K\left[ \partial^{<m}\left( a \right)\right]$, in the same sense as in the proof of Lemma \ref{sep}. Let $\tilde{f}$ be the differential polynomial obtained from $\tilde{F}\left( X \right)$ by replacing the appearances of $\partial^{<m} \left( a \right)$ in $\tilde{F}$ by $\left( X, \dotsc , X^{\left( m-1 \right)}\right)$. By Lemma \ref{sep} $f'\neq 0$ and, as in the proof of this Lemma, for any $k>m$ we have $\partial^{k} \left( a \right)\in K\left( \partial^{\le m }  \left( a \right) \right)$ and actually there is a polynomial $P_k\in K\left[ X_0,\dotsc , X_m \right]$, depending only on $\tilde{f}$, such that
\begin{equation*}
\partial^{k} \left( a \right)=\frac{P_k\left( \partial^{\le m } \left( a \right)\right) }{\tilde{f}'\left( \partial^m \left( a \right) \right)^{q^{k-m}}}. \tag{$\ast$}
\end{equation*}
Let $f_0$ be the differential rational function obtained from the differential polynomial $f$ by replacing $X^{\left( k \right)}$ by 
$$\frac{P_k\left( X, X', \dotsc, X^{\left( m \right)} \right)}{\tilde{f}'\left( X^{\left( m \right)} \right)^{q^{k-m}}}$$
for $k>m$. We see that
$$f_0 = \frac{f_1}{\tilde{f}'\left( X^{\left( m \right)} \right)^{N}}$$
for some $N>0$ and some differential polynomial $f_1\in K\left\{ X \right\}$ of order at most $m$. By the equality $\left( \ast \right)$ we have that $f_0\left( a \right) = f\left( a \right)  = 0$. Thus $f_1 = f_2 \cdot \tilde{f}$ for some $f_2\in K \left\{ X \right\}$.

\vspace{0.5cm}

\textbf{Claim 1.} If $b\in L$ is a solution of the system $\tilde{f} \left( x \right) = 0, \  \tilde{f}'\left( x \right) \cdot g\left( x \right) \neq 0$, then it is also a solution of $f\left( x \right) = 0, \ g\left( x \right) \neq 0$.

\textit{Proof of Claim 1.: }
Let $b\in L$ be a solution of the system $\tilde{f} \left( x \right) = 0, \  \tilde{f}'\left( x \right) \cdot g\left( x \right) \neq 0$. Surely $g\left( b \right) \neq 0$, so we need only to show that $f\left( b \right) = 0$. Since $\partial^{<m}\left( a \right)$ is algebraically independent over $K$, by the formula $\left( \ast \right)$ we have that 
$$\partial^{k} \left( b \right)=\frac{P_k\left( \partial^{\le m } \left( b \right)\right) }{\tilde{f}'\left( \partial^m \left( b \right) \right)^{q^{k-m}}}$$
for $k>m$. Thus, by the definition of $f_0$ we have $f_0\left( b \right) = f \left( b \right)$. On the other hand
$$f_0\left( b\right)  = \frac{f_1\left( b\right) }{\tilde{f}'\left( \partial^m\left( b \right) \right)^N} = \frac{f_2 \left( b\right ) \cdot \tilde{f}\left( b\right ) }{\tilde{f}'\left( \partial^m\left( b \right) \right)^N}  = 0,$$
since $\tilde{f} \left( b\right ) = 0$. Thus  $f\left( b \right) = f_0\left( b \right) = 0$, as desired.
\qed
\vspace{0.5cm}

Therefore, by the minimality assumptions on $f$ and $g$, we have that the order of the original $f$ is $m$, $f$ is separable as a polynomial in the variable $X^{(m)}$. Reasoning as in Claim 1. we may assume that the order of $g$ is at most $m$. If $g$ would have order strictly smaller than $m$, then $K\models \left( \exists x \right) \left( f\left( x\right) = 0 , g\left( x\right) \neq 0 \right)$, as $K$ satisfies the Wood axioms, contrary to the assumptions. Thus $g$ has order precisely $m$.

Denote by $F, G\in K\left( \partial^{<m} \left( a \right) \right) \left[ X^{(m)} \right]$ the polynomials obtained from $f, g$ by replacing $\partial^{<m} \left( a \right)$ by $\left( X, \dotsc , X^{\left( m-1 \right)}\right)$. 

\vspace{0.5cm}
\textbf{Claim 2.} The element $F$ and $G$ of $K\left( \partial^{<m} \left( a \right) \right) \left[ X^{(m)} \right]$ are coprime.

\textit{Proof of Claim 2.: }
Denote the greatest common divisor of $F$ and $G$ in $K\left( \partial^{<m} \left( a \right) \right) \left[ X^{(m)} \right]$ by $\tilde{G}$ and assume it is not invertible, i. e. not an element of $K\left( \partial^{<m} \left( a \right) \right)$.  By multiplying $\tilde{G}$ by some non-zero element of $K\left[ \partial^{<m} \left( a \right) \right]$ we may assume that  $\tilde{G}\in K\left[ \partial^{<m} \left( a \right) \right] \left[ X^{(m)} \right]$. By replacing the appearances of $\partial^{<m} \left( a \right)$ in $\tilde{G}$ by $\left( X, \dotsc , X^{\left( m-1 \right)}\right)$ obtain a differential polynomial $\tilde{g}\in K\left\{ X \right\}$. Since $\tilde{G}$ divides of $F, G$ and the tuple $\partial^{<m} \left( a \right)$ is algebraically independent over $K$, we get that $\tilde{g}$ divides $f$ and $g$ in the ring $K\left\{ X \right\}$. Let $\tilde{f}$ be the quotient of $f$ by $\tilde{g}$. Then
$$K \models \left(\exists x\right) \left( f\left( x \right) = 0\wedge g\left( x \right)\neq 0\right) \Longleftrightarrow K \models \left(\exists x\right) \left( \tilde{f}\left( x \right) = 0\wedge g\left( x \right)\neq 0\right)$$
but $\tilde{f}$ has smaller degree in $X^{\left( m \right)}$ than $f$, which contradicts the minimality assumptions on $f$ and $g$.
\qed
\vspace{0.5cm}

Using Claim 2. and Euclidean division in the ring $K\left( \partial^{<m} \left( a \right) \right) \left[ X^{(m)} \right]$ we can find some $P_0, Q_0 \in K\left( \partial^{<m} \left( a \right) \right) \left[ X^{(m)} \right]$ such that
$$P_0\left( X^{(m)} \right)F\left( X^{(m)} \right)+Q_0\left( X^{(m)} \right)G\left( X ^{(m)}\right) = 1$$
By multiplying this equality by some non-zero element of $K\left[ \partial^{<m} \left( a \right) \right]$ we get that there are some non-zero $P, Q \in K\left[ \partial^{<m} \left( a \right) \right] \left[ X^{(m)} \right]$ such that $\tilde{G}:=PF+QG\in K\left[ \partial^{<m} \left( a \right) \right]\setminus \left\{ 0 \right\}$. We again replace all appearances of $\partial^{<m} \left( a \right)$ in $\tilde{G}$ by $\left( X, \dotsc , X^{\left( m-1 \right)}\right)$ and obtain that there are non-zero differential polynomials $p, q\in K\left\{ X \right\}$ such that $\tilde{g}\left( X \right) := p\left( X \right)f\left( X \right)+q\left( X \right)g\left( X \right)$ is non-zero  and of order smaller than $m$.

The differential polynomial $f$ has order $m$  and is separable in $X^{\left( m \right)}$, and $g$ is non-zero, of order smaller that $m$. Thus, the system
$$f\left( x \right) = 0, \ \tilde{g}\left( x \right) \neq 0$$
has a solution in $K$, by the Wood axioms. But, since $\tilde{g}\left( X \right) = p\left( X \right)f\left( X \right)+q\left( X \right)g\left( X \right)$, any solution of this system is also a solution of the original system
$$f\left( x \right) = 0, \ g\left( x \right) \neq 0,$$
which had no solution in $K$ - a contradiction.
\end{proof}

\begin{remark}
\label{geo}
Using Theorem \ref{main} we can now give a positive answer to Question 4 from \cite{DerFro}. We also slightly improve on the form of the geometric axioms suggested there. Let $\left( K, \partial \right)$ be a $\fr^n$-differential field. We work in some big ambient algebraically closed field $\Omega$ containing $K$. For a $K$-variety $V$ we recall the definition of the ``twisted Frobenius tangent bundle of $V$'' (denoted $V^{\left( 1 \right)}$) from \cite{DerFro}. Suppose $V$ is given as the zero locus of an ideal $I\trianglelefteq K\left[ \bar{X} \right]$, where $\bar{X}=\left( X_1,\dotsc , X_m \right)$ is a tuple of variables. Let $\bar{X}'=\left( X'_1,\dotsc , X'_m \right)$ be a new tuple of variables. For $f\in K\left[ \bar{X} \right]$ we set
$$\partial\left( f \right) \left( \bar{X}, \bar{X}' \right) = f^\partial\left( \bar{X}^q \right) + \sum_{i=1}^m \frac{\partial f}{\partial X_i}\left( \bar{X} \right)^q X_i'.$$
We define $V^{\left( 1 \right)}$ as the set of zeroes of the ideal $\left( I, \partial\left( I \right) \right) \trianglelefteq K\left[ \bar{X}, \bar{X}' \right]$. If we took in the above formulas $\partial = 0, q= 0$, then the resulting $V^{\left( 1 \right)}$ would be the usual tangent bundle of $V$. Note that for any $a\in V\left( K \right)$ we have $\left( a, \partial\left( a \right) \right)\in V$ and thus we have a projection map $V^{\left( 1 \right)}\to V$.

We are now able to state the new geometric axioms for $\fr^n-\dcf_p$:

\begin{quote}
    Suppose $V, W$ are $K$-irreducible $K$-varieties and $W\subseteq V^ {\left( 1 \right)}$. If the projection map $W\to V$ is separable, then there is some $a\in V\left( K \right)$ such that $\left( a, \partial\left( a \right) \right)\in W$.
\end{quote}
It is standard, that this is expressible by a scheme of first-order conditions, as explained in \cite{DerFro}. This looks a bit different than what is suggested in Question 4 from \cite{DerFro}, so we explain the changes. Literally, Question 4 suggest the following axioms
\begin{quote}
    Assume $K$ is strict. Suppose $V, W$ are $K$-irreducible $K$-varieties and $W\subseteq V^{\left( 1 \right)}$. If the projection map $W\to V$ is dominant and separable, then there is some $a\in V\left( K \right)$ such that $\left( a, \partial\left( a \right) \right)\in W\setminus X$.
\end{quote}
The word ``dominant'' is redundant, as separable morphisms are by definition dominant. The variety $X$ can be removed by replacing $W$ and $V$ by some higher dimensional variety. Finally, strictness of $K$ follows from the latter part of the axioms, as follows. Assume $K$ is a $\fr^n$-differential field satisfying the above axioms, but without the sentence ``$K$ is strict''. We will prove that $K$ is strict. Assume that is not the case and let $c\in K^\partial \setminus K^p $ be a constant, which is not a $p-$th power. Define $V:=\left\{ c^{1/p} \right\}$, i. e. $V$ is the zero locus of the irreducible polynomial $f\left( X \right) = X^p-c$. Thus, the twisted tangent bundle $V^{(1)}$ is the zero locus of the polynomial
$$\partial\left( f \right) \left ( X, X' \right) = f^\partial \left( X^q \right) + f'\left( X \right)^q X' = 0,$$
i. e. $V^{(1)} = \left\{ c^{1/p} \right\}\times \mathbb{A}^1$. Set $W:=\left\{ \left( c^{1/p}, 0 \right) \right\}$. Clearly $W\subseteq V^{(1)}$ and the projection map $W\to V$ is separable, therefore by the geometric axioms there exists some $a\in V\left( K \right)$ such that $\left( a, \partial\left( a \right)\right) \in W\left( K \right)$. But $a\in V\left( K \right)$ means that $a\in K$ and $a^p=c$, contrary to the assumption that $c\not\in K^p$.

Since the axioms described above are contained in the original axioms of $\fr^n-\dcf_p$ and they contain the Wood axioms (as explained in \cite{DerFro} above Question 4), we get a new geometric axiomatization of $\fr^n-\dcf_p$.

\end{remark}
\begin{remark}
For the sake of completeness we point out, that Question 6 from \cite{DerFro} has an obvious negative answer. It asks whether $\left( K, \partial^2 \right)$ is a model of $\fr^{2n}-\dcf_p$, provided that $\left( K, \partial \right)$ is a model of $\fr^{n}-\dcf_p$. The answer is negative, since the constants of $\left( K, \partial^2 \right)$ are strictly bigger than $K^p=K^\partial$, so $\left( K, \partial^2 \right)$  is not even strict.
\end{remark}

\section*{Acknowledgment}
I would like to thank Piotr Kowalski for his careful reading of this paper and many valuable comments.
\bibliographystyle{plain}
\bibliography{main}

\end{document}